\newtheorem{thm}{Theorem}
\newtheorem{lem}[thm]{Lemma}
\theoremstyle{remark}
\newtheorem{rem}{Remark}
\theoremstyle{definition}
\newtheorem{dfn}[thm]{Definition}
\newcommand{\norm}[2]{\left\|\left.{#1}\right|{#2}\right\|}
\newcommand{\R}{\mathbb{R}}
\newcommand{\Rn}{{\mathbb{R}^n}}
\newcommand{\N}{\mathbb{N}}
\newcommand{\C}{\mathbb{C}}
\newcommand{\Z}{\mathbb{Z}}
\newcommand{\supp}{{\rm supp\ }}
\newcommand{\ellqp}{{\ell_{q(\cdot)}(L_{p(\cdot)})}}
\newcommand{\p}{{p(\cdot)}}
\newcommand{\q}{{q(\cdot)}}
\newcommand{\Lp}{L_\p(\Rn)}
\newcommand{\Lpq}{L_{\p,\q}(\Rn)}
\renewcommand{\P}{\mathcal{P}(\Rn)}
\newcommand{\esssup}{\operatornamewithlimits{ess\,sup}}
\newcommand{\essinf}{\operatornamewithlimits{ess\,inf}}
\title{Lorentz spaces with variable exponents}
\author{Henning Kempka}
\address{Mathematical Institute, Friedrich-Schiller-University Jena, D--07737 Jena, Germany}
\curraddr{Institute of Applied Analysis, Technical University Bergakademie Freiberg, D--09596 Freiberg, Germany}
\email{henning.kempka@uni-jena.de}
\thanks{The first author acknowledges the financial support by the DFG grants HA 2794/5-1 and KE 1847/1-1.}
\author{Jan Vyb\'\i ral}
\address{Department of Mathematics, Technical University Berlin, Street of 17. June 136, D-10623 Berlin, Germany}
\email{vybiral@math.tu-berlin.de}
\thanks{The second author acknowledges the support by the DFG Research Center MATHEON ``Mathematics for key technologies'' in Berlin.}
\begin{document}
\maketitle
\begin{abstract}
We introduce Lorentz spaces $L_{p(\cdot),q}(\R^n)$ and $L_{p(\cdot),q(\cdot)}(\R^n)$ with variable exponents.
We prove several basic properties of these spaces including embeddings and the identity $L_{p(\cdot),p(\cdot)}(\R^n)=L_{p(\cdot)}(\R^n)$. We also show that these spaces arise through real interpolation between $L_{\p}(\R^n)$ and $L_\infty(\R^n)$.
Furthermore, we answer in a negative way the question posed in \cite{DHN} 
whether the Marcinkiewicz interpolation theorem holds in the frame of Lebesgue spaces with variable integrability.
\end{abstract}
\section{Introduction}

Lorentz spaces were introduced in \cite{Lor1, Lor2} as a generalization of classical Lebesgue spaces and have become a standard tool in mathematical analysis, cf. \cite{AM,CPSS, BS, Grafaneu}.
For an introduction to Lorentz spaces we refer e.g. to \cite[Chapter V]{SW}, \cite[Chapter 4]{BS} or \cite[Chapter 1]{Grafaneu}.

One of the main ingredients of the theory of Lorentz spaces is the celebrated Marcinkiewicz interpolation theorem, which states that under certain conditions one can deduce
the strong boundedness of a sublinear operator $T$ on the interpolation spaces provided that the operator is weakly bounded at the endpoints of the interpolation pair.
This approach was used for example in the classical book of Stein \cite{S} to prove the boundedness of the Hardy-Littlewood maximal operator on $L_p(\R^n)$ for $1<p\le \infty$.


Another classical topic we shall touch in our work are the Lebesgue spaces $L_{\p}(\R^n)$ of variable integrability. The study of this class of function spaces
goes back to Orlicz \cite{Orlicz}. After the survey paper of Kov\'a\v{c}ik and R\'akosn\'\i k \cite{KoRa}, there has been an enormous interest in these spaces 
(and in Sobolev spaces $W^1_{\p}(\Omega)$ built on $L_{\p}(\Omega)$) especially in connection with the application 
in modeling of electrorheological fluids \cite{Ruz1}.
Moreover, the spaces $L_{\p}(\R^n)$ possess interesting applications in the theory of PDE's, variational calculus, financial mathematics and image processing. A recent overview of this vastly growing field is given in \cite{DHHR}.
A fundamental breakthrough concerning spaces of variable integrability was the observation that, under certain regularity assumptions on $\p$, the Hardy-Littlewood maximal operator is also bounded on 
$L_{\p}(\R^n)$, see \cite{Max1}. This result has been generalized to wider classes of exponents $\p$ in \cite{CruzUribe03}, \cite{Max3} and \cite{Max4}.
Unfortunately, it turned out that the standard proof of Stein \cite{S} for spaces with constant indices breaks down and completely different methods had to be used to achieve this result, see \cite{DHHR}.

The main aim of this paper is to return to this topic and to study the validity of the Marcinkiewicz interpolation theorem in the frame of Lebesgue spaces with variable integrability.
For this reason, we first explore the possibility of extending the definition of Lorentz spaces to the setting of variable integrability exponents.
We show, that there is really a natural way to define the Lorentz spaces $L_{\p,\q}(\Rn)$, which extends the scale of Lebesgue spaces with variable exponents, i.e. $L_{\p,\p}(\Rn)=L_{\p}(\Rn)$
for $\p=\q$. Later on, we study the interpolation properties of this new scale of spaces. A special case (see Remark \ref{rem:4}) of Theorem \ref{thm:interpol} shows that 
$$
(L_{\p}(\R^n),L_\infty(\R^n))_{\theta,q}=L_{\tilde p(\cdot),q}(\R^n)
$$
for $0<\theta<1$ and 
$$
\frac{1}{\tilde p(\cdot)}=\frac{1-\theta}{\p}.
$$
Finally, we discuss the validity of the Marcinkiewicz interpolation theorem in the context of this new scale of function spaces, an open question posed in \cite{DHN}. It turns out that the answer is negative
and we provide a detailed counterexample to this conjecture.

The structure of the paper is as follows. Section 2 collects classical definitions of Lorentz spaces with constant indices and of Lebesgue spaces with variable integrability. Furthermore, the definition of Lorentz spaces
with variable integrability is given. After collecting some basic properties of this new scale of function spaces in Section 3, we study the real interpolation properties of this scale in Section 4.
Section 5 is devoted to Marcinkiewicz interpolation and contains the counterexample to \cite[Question 2.8]{DHN}. Finally, Section 6 collects some possible research directions and open problems.

At the end of this introduction we would like to mention that another definition of Lorentz spaces $\mathcal{L}^{\p,\q}(\Rn)$ with variable exponents was recently given in \cite{EphreKoki} with \cite{KokiSamko} and \cite{IsraKoki}
as forerunners. Their definition works with non-increasing rearrangement and two variable exponents $\p,\q:[0,\infty)\to [1,\infty]$.
Due to this effect, the important and natural identity $\mathcal{L}^{\p,\p}(\Rn)=L_{\p}(\Rn)$ does not hold in this scale of variable Lorentz spaces. This is a consequence of the definition of $L_{\p}(\Rn)$ where the variable exponent $\p$ is defined on $\Rn$ and not on $[0,\infty)$.
We return to this topic in detail in Remark \ref{rem:Samko}.

\section{Old and new definitions}

In this section we collect the very well known definitions of classical Lorentz spaces (Section \ref{def_sec:Lorentz}) and Lebesgue spaces of variable exponents (Section \ref{def_sec:Lebesgue}). Finally, in Section \ref{def_sec:Lorentz_var},
we provide the definition of Lorentz spaces with variable exponents. For simplicity, we start in Definition \ref{dfn2} with the more intuitive spaces $L_{\p,q}(\Rn)$. The Lorentz spaces $L_{\p,\q}(\Rn)$ with both exponents variable
are introduced shortly after in Definition \ref{dfn3}.

\subsection{Lebesgue spaces with variable exponents}\label{def_sec:Lebesgue}
Let us now recall the definition of the variable Lebesgue spaces $L_{\p}(\Rn)$.
A measurable function $p:\Rn\to(0,\infty]$ is called a variable exponent function if 
it is bounded away from zero. For a set $A\subset\Rn$ we denote $p_A^+=\esssup_{x\in A}p(x)$ and $p_A^-=\essinf_{x\in A}p(x)$; we 
use the abbreviations $p^+=p_{\Rn}^+$ and $p^-=p_{\Rn}^-$. The variable exponent Lebesgue space $L_{p(\cdot)}(\Rn)$ consists of 
all measurable functions $f$ such that there exist an $\lambda>0$ such that the modular
\begin{align*}
\varrho_{L_{p(\cdot)}(\Rn)}(f/\lambda)=\int_{\Rn}\varphi_{p(x)}\left(\frac{|f(x)|}{\lambda}\right)dx
\end{align*}
is finite, where
\begin{align}
	\varphi_p(t)=\begin{cases}t^p&\text{if}\ p\in(0,\infty),\\
0&\text{if}\ p=\infty\ \text{and}\ t\le 1,\\\label{Formelphi}
\infty&\text{if}\ p=\infty\ \text{and}\ t>1.
\end{cases}
\end{align}
This definition is nowadays standard and was used also in \cite[Section 2.2]{AlHa10} and \cite[Definition 3.2.1]{DHHR}.

If we define $\R^n_\infty=\{x\in\Rn:p(x)=\infty\}$ and $\R^n_0=\Rn\setminus\R^n_\infty$, then the Luxemburg norm of a function $f\in L_{p(\cdot)}(\Rn)$ is given by
\begin{align*}
    \norm{f}{L_{p(\cdot)}(\Rn)}&=\inf\{\lambda>0:\varrho_{L_{p(\cdot)}(\Rn)}(f/\lambda)\leq1\}\\
    &=\inf\left\{\lambda>0:\int_{\R^n_0}\!\!\!\left(\frac{|f(x)|}{\lambda}\right)^{p(x)}\!\!\!\!\!\!dx\leq1\ \text{and}\ |f(x)|\le \lambda \ \text{for a.e.}\ x\in\R^n_\infty\right\}.
\end{align*}
It constitutes a norm if $p(\cdot)\geq1$, but it is always a quasi-norm if at least $p^->0$. Furthermore, the spaces $\Lp$ are complete, hence they are (quasi-) Banach spaces if $p^->0$, see \cite{KoRa} for details and further properties.
We denote the class of all measurable functions $p:\Rn\to(0,\infty]$ such that $p^->0$ by $\P$ and the corresponding modular is denoted by $\varrho_\p$ instead of $\varrho_{L_{p(\cdot)}(\R^n)}$.

\subsection{Classical Lorentz spaces}\label{def_sec:Lorentz}
Next, we recall the definition of classical Lorentz spaces as it can be found in \cite{BS} or \cite{Grafaneu}.
This definition makes use of the so-called \emph{non-increasing rearrangement} $f^*$ of a function $f$. For a measurable function $f:\Rn\to\C$, we define first the \emph{distribution function} $\mu_f:[0,\infty)\to[0,\infty]$ by
\begin{align*}
\mu_f(s)=\mu\{x\in\Rn:|f(x)|>s\}, \quad s\ge 0,
\end{align*}
where $\mu$ denotes the Lebesgue measure on $\Rn$.

The distribution function $\mu_f$ provides information about the size of $f$ but not about the local behavior of $f$. 
The (generalized) inverse function to the distribution function is called \emph{non-increasing rearrangement} $f^*:[0,\infty)\to[0,\infty]$ and is defined by
\begin{align*}
f^*(t)=\inf\{s>0:\mu_f(s)\leq t\}.
\end{align*}
Equipped with these tools, we are now ready to give the definition of the classical Lorentz spaces with constant indices.
\begin{dfn}\label{dfn1}
Given a measurable function $f$ on $\Rn$ and real parameters $0<p,q\leq\infty$, we define
\begin{align*}
	\norm{f}{L_{p,q}(\Rn)}=\begin{cases}\displaystyle
		\left(\int_0^\infty\left(t^{1/p}f^*(t)\right)^q\frac{dt}{t}\right)^{1/q},& \text{if }q<\infty\\
		\displaystyle\sup_{t>0}t^{1/p}f^*(t),&\text{if }q=\infty.
	\end{cases}
\end{align*}
The space of all measurable $f:\Rn\to\C$ with $\norm{f}{L_{p,q}(\Rn)}<\infty$ is denoted by $L_{p,q}(\Rn)$. The spaces are complete and they are normable for $1<p<\infty$ and $1\leq q\leq\infty$, see \cite[Theorem 1.4.11 and Exercise 1.4.3]{Grafaneu}. 
\end{dfn}
The use of non-increasing rearrangement makes it rather difficult to extend Definition \ref{dfn1} to variable exponents $\p,\q:\Rn\to(0,\infty]$.
It is very well known that the spaces $\Lp$ are not translation invariant (see Proposition 3.6.1 in \cite{DHHR}) and therefore the membership of $f$ in $\Lp$ cannot be characterized by any condition on $f^*$ only.\\
To avoid this obstacle, we look for an equivalent characterization of Lorentz spaces $L_{p,q}(\Rn)$ which does not make use of the notion of non-increasing rearrangement.
Therefore we calculate for $p,q<\infty$ using Fubini's theorem and the substitution $s^{p/q}:=t$ (cf. Proposition 1.4.9 in \cite{Grafaneu})
\begin{align}
\norm{f}{L_{p,q}(\Rn)}&=\left(\int_0^\infty\left(t^{1/p}f^*(t)\right)^q\frac{dt}{t}\right)^{1/q}=\left(\int_0^\infty\frac pq f^*(s^{p/q})^{q}{ds}\right)^{1/q}\notag\\
&=\left(\frac pq\right)^{1/q}\left(\int_0^\infty\mu\{s\geq0:f^*(s^{p/q})^{q}> t\}dt\right)^{1/q}\notag\\
&=\left(\frac pq\right)^{1/q}\left(\int_0^\infty\mu\{s\geq0:f^*(s^{p/q})> t^{1/q}\}dt\right)^{1/q}\notag\\
&=p^{1/q}\left(\int_0^\infty\lambda^q \mu\{s\geq0:f^*(s^{p/q})> \lambda\}\frac{d\lambda}{\lambda}\right)^{1/q}\notag\\
&=p^{1/q}\left(\int_0^\infty\lambda^q\mu\{s\geq0:f^*(s)> \lambda\}^{q/p}\frac{d\lambda}{\lambda}\right)^{1/q}\notag\\
&=p^{1/q}\left(\int_0^\infty\lambda^q \norm{\chi_{\{x\in\Rn:|f(x)|> \lambda\}}}{L_p(\Rn)}^{q}\frac{d\lambda}{\lambda}\right)^{1/q}\label{Formel1}.	
\end{align}
Here, $\chi_{\{x\in\Rn:|f(x)|> \lambda\}}$ stands for the characteristic function of the set $\{x\in\Rn:|f(x)|> \lambda\}$. If no confusion is possible, this will also be denoted by
$\chi_{\{|f|>\lambda\}}$.

The equation \eqref{Formel1} can be discretized and we derive
\begin{align}
\norm{f}{L_{p,q}(\Rn)}
&\sim p^{1/q}\left(\sum_{k=-\infty}^\infty 2^{kq}\norm{\chi_{\{x\in\Rn:|f(x)|>2^k\}}}{L_p(\Rn)}^q\right)^{1/q}.\label{Formel2}
\end{align}

\subsection{Lorentz spaces with variable exponents}\label{def_sec:Lorentz_var}
The expression \eqref{Formel1} for the norm can be generalized quite easily to variable exponent $\p$ with $q$ constant. Surprisingly enough, even $q$ can be considered variable when we use the spaces 
$\ellqp$ of Almeida and H\"ast\"o \cite{AlHa10} and the discretized equation \eqref{Formel2}.
Furthermore we do not destroy the local properties of the function $f$, since it gets not rearranged and the exponents map from $\Rn$.
\begin{dfn}\label{dfn2}
Let $p\in\P$ be a variable exponent with range $0<p^-\leq p^+\leq\infty$ and let $0<q\leq\infty$. Then $L_{p(\cdot),q}(\R^n)$ is the collection of all measurable functions $f:\Rn\to\C$ such that
\begin{align}\label{LorentzNormqconst}
\norm{f}{L_{\p,q}(\Rn)}=
\begin{cases}\displaystyle
	\left(\int_0^\infty \lambda^q\norm{\chi_{\{x\in\Rn:|f(x)|>\lambda\}}}{\Lp}^q\frac{d\lambda}{\lambda}\right)^{1/q},&\text{ if }q<\infty\\ 
	\displaystyle\sup_{\lambda>0}\lambda\norm{\chi_{\{x\in\Rn:|f(x)|>\lambda\}}}{\Lp},&\text{ if }q=\infty
\end{cases}
\end{align}
is finite.
\end{dfn}
Using the $\ellqp$ spaces introduced recently in \cite{AlHa10}, we may even consider the situation, where also $q$ is variable.
Let us recall their approach. For a sequence $(f_k)_{k\in\Z}$ we define the modular
\begin{align*}
\varrho_{\ell_\q(L_\p)}((f_k)_k)=\sum_{k\in\Z}\inf\left\{\lambda_k>0:\varrho_\p\left(\frac{f_k}{\lambda_k^{1/\q}}\right)\leq1\right\},
\end{align*}
with the convention $\lambda^{1/\infty}=1$. If $q^+<\infty$ or if $\q\leq\p$ we can replace this by a simpler expression
\begin{align}\label{EasyModular}
\varrho_{\ell_\q(L_\p)}((f_k)_k)=\sum_{k\in\Z}\norm{\varphi_\q(|f_k(\cdot)|)}{L_{\frac\p\q}(\Rn)},
\end{align}
which is much more intuitive. Here $\varphi_q(t)$ equals basically $t^q$, see \eqref{Formelphi}. The norm in these spaces gets defined as usual as the Luxemburg norm
\begin{align*}
\norm{(f_k)_k}{\ellqp}=\inf\{\mu>0:\varrho_{\ellqp}\left({f_k}/{\mu}\right)\leq1\}.
\end{align*}
Up to now, it is not completely clear under which conditions on $\p$ and $\q$ the expression above becomes a norm. It was shown in \cite{AlHa10} that it always constitutes a quasi-norm if $p^-,q^->0$. 
Further it is known (see \cite{KV11}) that it is a norm if either $\frac1\p+\frac1\q\leq1$ holds pointwise for all $x\in\Rn$ or if $1\leq\q\leq\p\leq\infty$ holds pointwise. Also in this work there is given 
an example where $\min(\p,\q)\geq1$ but the triangle inequality does not hold. Let us mention that it is an open question if there exists an equivalent norm on $\ellqp$
whenever $\min(\p,\q)\geq1$. Nevertheless, since our exponents are between $(0,\infty]$ we generally work with quasi-norms and there are no obstacles with that.\\
We use now the modular $\varrho_{\ell_\q(L_\p)}$ and \eqref{Formel2} to define the variable Lorentz spaces $\Lpq$.
\begin{dfn}\label{dfn3}
Let $p,q\in\P$ be two variable exponents with range $0<p^-\leq p^+\leq\infty$ and $0<q^-\leq q^+\leq\infty$. Then $\Lpq$ is the collection of all measurable functions $f:\Rn\to\C$ such that
\begin{align}\label{def:norm1}
\norm{f}{\Lpq}=\inf\Bigl\{\lambda>0:\varrho_{\ell_\q(L_\p)}\Bigl(2^k\chi_{\{x\in\Rn:|f(x)/\lambda|>2^k\}}\Bigr)\le 1\Bigr\}<\infty.
\end{align}
\end{dfn}

Before we discuss the properties of these new function spaces we derive an equivalent expression for $\norm{f}{\Lpq}$.
\begin{lem}\label{lem:equiv}Let $p,q\in\P$ be two variable exponents with range $0<p^-\leq p^+\leq\infty$ and $0<q^-\leq q^+\leq\infty$. Then
\begin{equation}\label{def:norm2}
\norm{f}{\Lpq}\approx \norm{\left(2^k\chi_{\{x\in\Rn:|f(x)|>2^k\}}\right)_{k=-\infty}^\infty}{\ellqp}.
\end{equation}
\end{lem}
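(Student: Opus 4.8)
The plan is to unravel both sides of \eqref{def:norm2} as infima of modular inequalities and then to match those inequalities along the dyadic scale. Writing the condition $|f(x)/\lambda|>2^k$ as $|f(x)|>\lambda 2^k$, definition \eqref{def:norm1} reads
\[
\norm{f}{\Lpq}=\inf\{\lambda>0:G(\lambda)\le 1\},\qquad G(\lambda):=\varrho_{\ellqp}\Bigl(\bigl(2^k\chi_{\{|f|>\lambda 2^k\}}\bigr)_{k\in\Z}\Bigr),
\]
while the right-hand side of \eqref{def:norm2}, being a Luxemburg norm, equals $\inf\{\mu>0:H(\mu)\le 1\}$, where
\[
H(\mu):=\varrho_{\ellqp}\Bigl(\bigl(2^k\chi_{\{|f|>2^k\}}/\mu\bigr)_{k\in\Z}\Bigr).
\]
(Here dividing the sequence by $\mu$ means multiplying each $\chi_{\{|f|>2^k\}}$ by $2^k/\mu$.) So the lemma is the assertion that these two infima are comparable.

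The observation on which everything hinges is that $G(2^m)=H(2^m)$ for every $m\in\Z$. Indeed, $G(2^m)=\varrho_{\ellqp}\bigl((2^k\chi_{\{|f|>2^{k+m}\}})_k\bigr)$, and since $\varrho_{\ellqp}$ is defined by a sum of nonnegative terms over all $k\in\Z$, this sum is unchanged under the shift $k\mapsto k-m$; combined with the trivial identity $2^{k-m}\chi_{\{|f|>2^k\}}=(2^k\chi_{\{|f|>2^k\}})/2^m$ this gives exactly $H(2^m)$. In words: for this particular sequence, dividing all coefficients by $2^m$ has the same effect as multiplying all thresholds by $2^m$.

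I would then record the (routine) monotonicity: $G$ and $H$ are both non-increasing. Each $\varphi_p$ is non-decreasing on $[0,\infty)$, so $\varrho_\p$, and hence via its defining formula also $\varrho_{\ellqp}$, is monotone with respect to the pointwise order of nonnegative sequences; and the sequences $(2^k\chi_{\{|f|>\lambda 2^k\}})_k$ and $(2^k\chi_{\{|f|>2^k\}}/\mu)_k$ decrease pointwise as $\lambda$, respectively $\mu$, grows. In particular, if $\mu$ exceeds the Luxemburg norm on the right of \eqref{def:norm2} then $H(\mu)\le 1$, and likewise for $G$ and $\norm{f}{\Lpq}$.

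Finally I would pass between the two expressions by rounding to the dyadic scale. Given $\mu>0$ with $H(\mu)\le 1$, pick $m\in\Z$ with $\mu\le 2^m<2\mu$; then $G(2^m)=H(2^m)\le H(\mu)\le 1$ by the two facts above, so $\norm{f}{\Lpq}\le 2^m<2\mu$, and taking the infimum over all such $\mu$ yields $\norm{f}{\Lpq}\le 2\,\norm{\bigl(2^k\chi_{\{|f|>2^k\}}\bigr)_{k}}{\ellqp}$. Interchanging the roles of $G$ and $H$ gives the reverse estimate with the same constant, proving \eqref{def:norm2}. I do not expect a genuine obstacle here; the only points needing a moment's care are the loss of the factor $2$ when a general parameter is replaced by the nearest power of $2$, and the (elementary) monotonicity of the modular $\varrho_{\ellqp}$ in the entries of the sequence.
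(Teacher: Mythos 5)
Your proposal is correct and follows exactly the paper's route: the paper's proof consists precisely of the observation that for dyadic $\lambda=2^j$ the modular of the shifted-threshold sequence equals the modular of the scaled sequence, followed by the remark that ``the rest follows by simple monotonicity arguments.'' You have merely spelled out those monotonicity arguments (and the resulting factor $2$ from dyadic rounding) in full, which is a faithful completion of the paper's sketch.
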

\begin{proof}
If $\lambda=2^j$ for some $j\in\Z$, we obtain
$$
\varrho_{\ell_\q(L_\p)}\Bigl(2^k\chi_{\{x\in\Rn:|f(x)/\lambda|>2^k\}}\Bigr)=
\varrho_{\ell_\q(L_\p)}\biggl(\frac{2^k\chi_{\{x\in\Rn:|f(x)|>2^k\}}}{\lambda}\biggr).
$$
The rest of the proof then follows by simple monotonicity arguments.
\end{proof}
\begin{rem}
	The somehow more complicated definition of the quasi-norm in Definition \ref{dfn3} was necessary. Only the expression in \eqref{def:norm1} is homogeneous; i.e.~
	\begin{align*}
		\norm{\lambda f}{\Lpq}=|\lambda|\cdot \norm{f}{\Lpq}\quad\text{for all $\lambda\in\R$.}
	\end{align*}  
Easy examples show that the right-hand side of \eqref{def:norm2} fails to have this property.
Nevertheless due to Lemma \ref{lem:equiv}, both expressions are equivalent and therefore define the same spaces $\Lpq$.
In majority of our considerations, we shall work with the somehow simpler expression \eqref{def:norm2}.
\end{rem}
If $q(\cdot)=q$ is a constant function, then the Proposition 3.3 in \cite{AlHa10} shows that $\ell_{q(\cdot)}(L_{p(\cdot)})$ is really an iterated space, i.e.
$$
 \|(f_k)_{k\in\Z}|\ell_{q}(L_{p(\cdot)})\|=\Bigl(\sum_{k\in\Z}\|f_k|L_{\p}(\R^n)\|^q\Bigr)^{1/q}
$$
(with an appropriate modification if $q=\infty$). By \eqref{Formel2} and \eqref{def:norm2}, we obtain that Definitions \ref{dfn2} and \ref{dfn3} are equivalent.

Moreover, we observe by \eqref{Formel1} and \eqref{Formel2} that for constant functions $p(x)=p$ and $q(x)=q$ we get an equivalent norm for the usual Lorentz spaces $L_{p,q}(\Rn)$, whenever $p,q<\infty$.
A similar calculation justifies this fact also if $p<q=\infty$. If $p=\infty$, then the usual Lorentz spaces $L_{\infty,q}(\Rn)$ defined by Definition \ref{dfn1} consist only of the zero function whenever $0<q<\infty$, see Section 1.4.2 in \cite{Grafaneu}.
It is easy to see, that Definition \ref{dfn2} applied to $p(\cdot)=\infty$ and $q<\infty$ gives $L_{\infty,q}(\Rn)=L_\infty(\Rn)$.
Nevertheless, we will show that $L_{\p,\p}(\Rn)=\Lp$ and therefore the case $p=q=\infty$ is also included for $p=\infty$.\\
Summarizing, our spaces $\Lpq$ are equivalent to the usual Lorentz spaces $L_{p,q}(\Rn)$ if $\p=p$ and $\q=q$ are constant functions. The only exception is the case if $p=\infty$ and $0<q<\infty$, see Theorem \ref{thm:embedding}.

\begin{rem}\label{rem:Samko}
Another approach to generalize this definition to variable exponents was given in \cite{EphreKoki}, with forerunners \cite{KokiSamko} and \cite{IsraKoki}.
They introduced the spaces ${\mathcal L}^{\p,\q}(\Omega)$ by the corresponding quasi-norm
$$
\|f|{\mathcal L}^{\p,\q}(\Omega)\|=\|t^{\frac{1}{p(t)}-\frac{1}{q(t)}}f^*(t)|L_{\q}(0,|\Omega|)\|,
$$
where $\Omega\subset \Rn$ is a measurable set, $|\Omega|$ is its Lebesgue measure and $p,q:(0,|\Omega|)\to (0,\infty)$ are variable exponents.
The spaces ${\mathcal L}^{\p,\q}(\Omega)$ coincide with usual Lorentz spaces $L_{p,q}(\Omega)$ if $\p=p$ and $\q=q$ are constant.
On the other hand, in this scale there is no hope for the identity ${\mathcal L}^{\p,\p}(\Omega)=L_\p(\Omega)$ to hold, since in the definition of the Lebesgue spaces the variable exponent $\p$ is defined on whole $\Omega$.
\end{rem}

\section{Basic properties}
In this section, we prove several basic properties of the new scale of function spaces. 

\begin{thm} Let $p,q\in \P$. Then $\Lpq$ are quasi-Banach spaces.
\end{thm}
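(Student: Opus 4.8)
The plan is to verify the quasi-norm axioms for $\norm{\cdot}{\Lpq}$ and then to establish completeness; throughout I would work with the equivalent expression \eqref{def:norm2} from Lemma \ref{lem:equiv} and exploit that $\ellqp$ is a quasi-normed lattice (see \cite{AlHa10}) together with the Fatou property of $\Lp$ (see \cite{KoRa,DHHR}).

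Positive homogeneity is immediate from the form \eqref{def:norm1}, as already noted in the Remark above. To see that $\norm{f}{\Lpq}=0$ forces $f=0$ a.e., I would use \eqref{def:norm2}: vanishing of its right-hand side gives $\norm{\chi_{\{|f|>2^k\}}}{\Lp}=0$ for every $k\in\Z$, and from the definition of the Luxemburg norm (letting the scaling parameter tend to $0$) one reads off that $\norm{\chi_A}{\Lp}=0$ only if $\mu(A)=0$; since $\{|f|>0\}=\bigcup_{k\in\Z}\{|f|>2^k\}$ is a countable union of null sets, $f=0$ a.e. For the quasi-triangle inequality the starting point is the elementary inclusion $\{|f+g|>2^{k+1}\}\subseteq\{|f|>2^k\}\cup\{|g|>2^k\}$, which gives the pointwise estimate
\[
2^{k+1}\chi_{\{|f+g|>2^{k+1}\}}\le 2\cdot 2^{k}\chi_{\{|f|>2^k\}}+2\cdot 2^{k}\chi_{\{|g|>2^k\}},\qquad k\in\Z.
\]
Applying the lattice property, the homogeneity and the quasi-triangle inequality of $\ellqp$, and using that a shift of the summation index leaves the $\ellqp$-quasi-norm unchanged, one obtains $\norm{f+g}{\Lpq}\le c\bigl(\norm{f}{\Lpq}+\norm{g}{\Lpq}\bigr)$ with $c=c(\p,\q)$; in particular $\Lpq$ is a vector space.

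For completeness the main preliminary facts are: (i) a single coordinate of a sequence is dominated by the whole sequence, so $2^{k}\norm{\chi_{\{|f|>2^k\}}}{\Lp}\le\norm{\bigl(2^j\chi_{\{|f|>2^j\}}\bigr)_j}{\ellqp}\approx\norm{f}{\Lpq}$ for each $k$, because the $\ellqp$-modular is a sum of nonnegative terms; and (ii) if $\norm{\chi_A}{\Lp}$ is small then $\mu(A)$ is small, again directly from the Luxemburg norm (for $p^-<\infty$ one has $\mu(A)\le\norm{\chi_A}{\Lp}^{p^-}$ once $\norm{\chi_A}{\Lp}<1$). Combining (i) and (ii), a Cauchy sequence $(f_j)$ in $\Lpq$ is Cauchy in measure: for fixed $\varepsilon>0$ pick $k$ with $2^{k}\le\varepsilon$, so that $\mu(\{|f_j-f_l|>\varepsilon\})\le\mu(\{|f_j-f_l|>2^{k}\})\to0$. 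Hence a subsequence $(f_{j_l})_l$ converges a.e.\ to a measurable function $f$.

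Finally one passes to the limit along $(f_{j_l})$ by means of the Fatou property, which lifts from $\Lp$ to $\ellqp$ since the $\ellqp$-modular is assembled from $\Lp$-modulars summed over $k\in\Z$. Fixing $m$ and letting $l\to\infty$, the a.e.\ convergence $f_{j_l}-f_{j_m}\to f-f_{j_m}$ yields $\chi_{\{|f-f_{j_m}|>2^k\}}\le\liminf_{l}\chi_{\{|f_{j_l}-f_{j_m}|>2^k\}}$ a.e.\ for every $k$, whence by Lemma \ref{lem:equiv}, solidity and Fatou
\[
\norm{f-f_{j_m}}{\Lpq}\approx\norm{\bigl(2^k\chi_{\{|f-f_{j_m}|>2^k\}}\bigr)_k}{\ellqp}\le\liminf_{l}\norm{\bigl(2^k\chi_{\{|f_{j_l}-f_{j_m}|>2^k\}}\bigr)_k}{\ellqp}\approx\liminf_{l}\norm{f_{j_l}-f_{j_m}}{\Lpq}.
\]
The right-hand side tends to $0$ as $m\to\infty$ since $(f_j)$ is Cauchy; in particular $f-f_{j_m}\in\Lpq$ for large $m$, so $f\in\Lpq$ and $f_{j_m}\to f$ in $\Lpq$. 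Since a Cauchy sequence with a convergent subsequence converges, $f_j\to f$ and $\Lpq$ is complete. I expect the completeness part to be the main obstacle, and within it the passage from quasi-norm convergence to convergence in measure — i.e.\ pinning down the comparison of $\norm{\chi_A}{\Lp}$ with $\mu(A)$ and the extraction of a single coordinate from $\ellqp$; the quasi-triangle inequality and the Fatou step are then routine.
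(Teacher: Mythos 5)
Your proof is correct, and while the quasi-norm part (homogeneity, definiteness, and the quasi-triangle inequality via the inclusion $\{|f+g|>2^{k+1}\}\subseteq\{|f|>2^k\}\cup\{|g|>2^k\}$ together with the shift-invariance of the $\ellqp$-quasi-norm) coincides with the paper's argument, your completeness proof takes a genuinely different route. The paper extracts a rapidly convergent subsequence with $\|f_{l+1}-f_l\|\le 2^{-2l}$, shows that the telescoping series converges a.e.\ (using, as you do, the coordinate bound $\Lpq\hookrightarrow L_{\p,\infty}(\Rn)$ and the comparison of $\|\chi_A\|_{\Lp}$ with $\mu(A)$), and then estimates $\|2^k\chi_{\{|f-f_l|>2^k\}}\|_{\ellqp}^{\varrho}$ directly through the pointwise domination $\chi_{\{|\tilde f|>2^k\}}\le\sum_l\chi_{\{|f_{l+1}-f_l|>2^{k-l}\}}$ and the power-$\varrho$ triangle inequality of \cite[Theorem 3.8]{AlHa10}; this avoids any limiting argument in the quasi-norm because the relevant inequalities hold for the actual sum. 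You instead pass from Cauchy in quasi-norm to Cauchy in measure, take an a.e.\ convergent subsequence, and close the argument with the Fatou property of $\ellqp$. Your route is the more abstract one and would work verbatim in any quasi-normed function lattice with the Fatou property; its one ingredient that is asserted rather than cited is precisely that Fatou property of $\|\cdot\|_{\ellqp}$ under a.e.\ convergence. This does hold --- each summand $\inf\{\lambda_k>0:\varrho_\p(f_k/\lambda_k^{1/\q})\le1\}$ of the modular is lower semicontinuous under a.e.\ convergence from below by Fatou's lemma applied to $\varrho_\p$, and a sum of such functionals is again lower semicontinuous --- but it is not explicitly stated in \cite{AlHa10}, so a careful write-up should include this short verification (the paper's power-trick is designed exactly to sidestep it). With that caveat, both proofs are complete and of comparable length.
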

\begin{proof}
To prove that \eqref{def:norm1} defines a quasi-norm, we only have to show the quasi-triangle inequality. We use the estimate
\begin{align*}
\{x\in\Rn:|f(x)+g(x)|>2^k\}&\subset\{x\in\Rn:|f(x)|+|g(x)|>2^k\}\\
&\subset\{x\in\Rn:|f(x)|>2^{k-1}\}\cup\{x\in\Rn:|g(x)|>2^{k-1}\}
\end{align*}
to obtain
\begin{align*}
\chi_{\{x\in\Rn:|f(x)+g(x)|>2^k\}}&\leq \chi_{\{x\in\Rn:|f(x)|>2^{k-1}\}}+\chi_{\{x\in\Rn:|g(x)|>2^{k-1}\}}.
\end{align*}
This in turn implies
\begin{align*}
&\norm{f+g}{\Lpq}\approx\norm{\left(2^k\chi_{\{x\in\Rn:|f(x)+g(x)|> 2^k\}}\right)_{k=-\infty}^\infty}{\ellqp}\\
&\qquad\le \norm{\left(2^k\chi_{\{x\in\Rn:|f(x)|>2^{k-1}\}}\right)_{k=-\infty}^\infty+\left(2^k\chi_{\{x\in\Rn:|g(x)|>2^{k-1}\}}\right)_{k=-\infty}^\infty}{\ellqp}\\
&\qquad\le c\biggl\{  \norm{\left(2^k\chi_{\{x\in\Rn:|f(x)|>2^{k-1}\}}\right)_{k=-\infty}^\infty}{\ellqp}+\norm{\left(2^k\chi_{\{x\in\Rn:|g(x)|>2^{k-1}\}}\right)_{k=-\infty}^\infty}{\ellqp}\biggr\}\\
&\qquad\lesssim \norm{f}{\Lpq}+\norm{g}{\Lpq},
\end{align*}
where $c$ is the constant from the quasi-triangle inequality of $\ellqp$.

To show that the spaces $\Lpq$ are complete we take a Cauchy sequence $(f_l)_{l\in\N}\subset\Lpq$.
We chose a subsequence (which we denote by $(f_l)_{l\in\N}$ again) with
$$
\|f_{l+1}-f_l|L_{\p,\q}\|\le \frac{1}{2^{2l}},\quad l\in\N.
$$
For notational reasons, we put $f_0=0.$
We consider the function
$$
g(t):=\sum_{l=0}^\infty |f_{l+1}(t)-f_l(t)|.
$$
As $\chi_{\{g>\lambda\}}(x)\le \sum_{l=0}^\infty \chi_{\{|f_{l+1}-f_l|>\lambda/2^{l+1}\}}(x)$, we obtain
\begin{align*}
\|\chi_{\{g>\lambda\}}|\Lp\|^r&\le \sum_{l=0}^\infty \|\chi_{\{|f_{l+1}-f_l|>\lambda/2^{l+1}\}}|\Lp\|^r\\
&\le \sum_{l=0}^\infty \frac{2^{(l+1)r}}{\lambda^r}\cdot \|f_{l+1}-f_l|L_{\p,\infty}\|^r
\lesssim \sum_{l=0}^\infty \frac{2^{(l+1)r}}{\lambda^r} 2^{-2lr}
\end{align*}
where $r=\min(p^-,1)$ and we have used the embedding $\Lpq\hookrightarrow L_{\p,\infty}(\Rn)$, see Theorem \ref{thm:embedding}.
As the last sum converges, we get $\|\chi_{\{g>\lambda\}}|\Lp\|\to 0$ for $\lambda\to \infty$ and $g$ is finite almost everywhere.
Therefore, the series
$$
f(x)=\sum_{l=0}^\infty f_{l+1}(x)-f_l(x)\quad \text{and}\quad \tilde f(x)=\sum_{l=1}^\infty f_{l+1}(x)-f_l(x)=f(x)-f_1(x),  \quad x\in\R^n
$$
converge also almost everywhere.

It remains to show that $f\in\Lpq$ and $f_l\to f$ in $\Lpq.$

The estimate $\displaystyle 2^k \chi_{\{|\tilde f|>2^k\}}\le \sum_{l=1}^\infty 2^k\chi_{\{|f_{l+1}-f_l|>2^{k-l}\}}$ implies that
\begin{align*}
\|2^k \chi_{\{|\tilde f|>2^k\}}|\ellqp\|^\varrho&\lesssim \sum_{l=1}^\infty \|2^k\chi_{\{|f_{l+1}-f_l|>2^{k-l}\}}|\ellqp\|^\varrho\\
&= \sum_{l=1}^\infty 2^{l\varrho}\|2^{k-l}\chi_{\{|f_{l+1}-f_l|>2^{k-l}\}}|\ellqp\|^\varrho\lesssim\sum_{l=1}^\infty 2^{l\varrho}\cdot 2^{-2l\varrho}<\infty,
\end{align*}
where $\varrho>0$ is chosen small enough, cf. \cite[Theorem 3.8]{AlHa10}.

Therefore, $\tilde f\in\Lpq$ and $f=\tilde f+f_1\in \Lpq.$

Finally, for $l\in\N$ fixed, we consider
$$
f-f_l=\sum_{m=l}^\infty (f_{m+1}-f_m).
$$
The estimate $\chi_{\{|f-f_l|>2^k\}}\le \sum_{m=l}^\infty \chi_{\{|f_{m+1}-f_m|>2^{k-(m-l+1)}\}}$ implies the convergence $\|f-f_l|\Lpq\| \to0$ for $l\to\infty$ in a similar manner as above.
\end{proof}
We continue with a theorem showing that the scale of variable Lorentz spaces
includes the scale of Lebesgue spaces with variable exponent. We would like to emphasize, that the identity $L_{\p,\p}(\Rn)=\Lp$ holds without any restrictions on $\p$ with $0<p^-\leq p^+\leq\infty$.
\begin{thm}
If $p\in\P$, then it holds $L_{\p,\p}(\Rn)=\Lp$.
\end{thm}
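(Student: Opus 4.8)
The plan is to reduce the identity to a comparison of modulars. By Lemma~\ref{lem:equiv} it is enough to work with the homogeneous expression~\eqref{def:norm1}, so put
\[
G(\lambda):=\varrho_{\ell_{\p}(L_{\p})}\Bigl(\bigl(2^k\chi_{\{x\in\Rn:\,|f(x)/\lambda|>2^k\}}\bigr)_{k\in\Z}\Bigr),
\]
so that $\norm{f}{L_{\p,\p}(\Rn)}=\inf\{\lambda>0:G(\lambda)\le1\}$. Since here $\q=\p$, we have $\q\le\p$ pointwise and $\p/\q\equiv1$; hence the simplified modular~\eqref{EasyModular} applies, $L_{\p/\q}=L_1$, and Tonelli's theorem gives
\[
G(\lambda)=\sum_{k\in\Z}\int_{\Rn}\varphi_{p(x)}\bigl(2^k\chi_{\{|f(x)/\lambda|>2^k\}}\bigr)\,dx=\int_{\Rn}h_{p(x)}\!\Bigl(\tfrac{|f(x)|}{\lambda}\Bigr)\,dx,
\]
where for $a\ge0$ and $0<p\le\infty$ I write $h_p(a):=\sum_{k\in\Z:\,2^k<a}\varphi_p(2^k)$.

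The combinatorial core is then an elementary estimate for $h_p$. For $p<\infty$ the sum is geometric: with $K=\max\{k\in\Z:2^k<a\}$ (so $a/2\le2^K<a$) one obtains $h_p(a)=2^{Kp}(1-2^{-p})^{-1}$, and since $1\le(1-2^{-p})^{-1}\le(1-2^{-p^-})^{-1}=:C$ and $p\ge p^-$, this yields
\[
\varphi_p(a/2)\le h_p(a)\le C\,\varphi_p(a).
\]
For $p=\infty$ a direct check shows $h_\infty(a)=\varphi_\infty(a/2)$, so the same two-sided estimate holds, with the same constant $C\ge1$. Applying this with $a=|f(x)/\lambda|$ and $p=p(x)$ and integrating in $x$ gives, for every $\lambda>0$,
\[
\varrho_{\p}(f/2\lambda)\le G(\lambda)\le C\,\varrho_{\p}(f/\lambda).
\]

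It remains to convert these modular inequalities into quasi-norm estimates. Write $N(f)=\norm{f}{L_{\p,\p}(\Rn)}$ and $M(f)=\norm{f}{\Lp}$; both defining modulars are non-increasing in $\lambda$. If $\lambda>N(f)$, then $G(\lambda)\le1$, hence $\varrho_{\p}(f/2\lambda)\le1$, hence $M(f)\le2\lambda$; letting $\lambda\downarrow N(f)$ we get $M(f)\le2N(f)$. For the reverse inequality, recall the elementary sub-homogeneity $\varrho_{\p}(g/\mu)\le\mu^{-p^-}\varrho_{\p}(g)$ for $\mu\ge1$, and set $\mu=C^{1/p^-}$: if $\lambda>M(f)$ then $\varrho_{\p}(f/\lambda)\le1$, so $\varrho_{\p}(f/\mu\lambda)\le C^{-1}$, so $G(\mu\lambda)\le C\cdot C^{-1}=1$, whence $N(f)\le\mu\lambda$; letting $\lambda\downarrow M(f)$ gives $N(f)\le C^{1/p^-}M(f)$. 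Thus $\tfrac12\norm{f}{\Lp}\le\norm{f}{L_{\p,\p}(\Rn)}\le C^{1/p^-}\norm{f}{\Lp}$ with $C=(1-2^{-p^-})^{-1}$, which proves $L_{\p,\p}(\Rn)=\Lp$ with equivalent quasi-norms.

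The main obstacle I anticipate is that the two modulars agree only \emph{after a fixed dilation by $2$}, not on the nose, so one cannot simply match unit balls; this forces the endpoint $p(x)=\infty$ to be handled separately in the estimate for $h_p$, and, when $p^+=\infty$, it prevents replacing the lower bound $\varrho_{\p}(f/2\lambda)\le G(\lambda)$ by $c\,\varrho_{\p}(f/\lambda)\le G(\lambda)$ with a uniform $c>0$. Consequently the passage from modular to quasi-norm estimates must be run through the monotonicity-and-limit argument above rather than through the unit-ball property directly; everything else is routine bookkeeping.
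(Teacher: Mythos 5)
Your proof is correct and follows essentially the same route as the paper's: both reduce the statement to a two-sided comparison between the iterated modular of the dyadic level-set sequence and the Lebesgue modular, with the identical geometric-series computation $\sum_{2^k<a}2^{kp}=2^{Kp}(1-2^{-p})^{-1}$ producing the dilation by $2$ on one side and the constant $(1-2^{-p^-})^{-1}$ (equivalently the paper's $c=(1-2^{-p^-})^{-1/p^-}$ after your sub-homogeneity step) on the other. The only differences are presentational: you absorb the set $\{p(x)=\infty\}$ pointwise through $h_\infty$ instead of splitting $f=f_0+f_\infty$ as the paper does, and you spell out the modular-to-quasi-norm passage that the paper dismisses as easy.
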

\begin{proof}
We want to show
\begin{align}\label{ModulareInequality}
\varrho_\p(f/2)\leq\varrho_{\ell_\p(L_\p)}\left((2^k\chi_{\{x\in\R^n:|f(x)|>2^k\}})_{k=-\infty}^\infty\right)\leq\varrho_\p(cf),
\end{align}
where $c=(1-2^{-p^-})^{-1/p^-}$. From the inequalities above we conclude easily
\begin{align*}
	\frac12\norm{f}{\Lp}\lesssim\norm{f}{L_{\p,\p}(\Rn)}\lesssim c\norm{f}{\Lp},
\end{align*}
which proves the theorem. We first treat the case $|\Rn_{\!\!\!\infty}|=|\{x\in\Rn:p(x)=\infty\}|=0$. At the end we comment on the case $|\Rn_{\!\!\!\infty}|>0$. 
Since $\p=\q$ we can use the easy expression \eqref{EasyModular} for the modular and then the first inequality in \eqref{ModulareInequality} follows from
\begin{align*}
	\varrho_{\ell_\p(L_\p)}\left((2^k\chi_{\{|f|>2^k\}})_{k=-\infty}^\infty\right)&=\sum_{k=-\infty}^\infty\norm{|2^k\chi_{\{|f|>2^k\}}|^{p(x)}}{L_1(\Rn)}\\
	&=\int_\Rn\sum_{\{k\in \Z:2^k<|f(x)|\}}2^{kp(x)}dx.
\end{align*}
For fixed $x\in\Rn$ with $|f(x)|>0$ we choose the unique $k_x\in\Z$ with $2^{k_xp(x)}<|f(x)|^{p(x)}\leq2^{(k_x+1)p(x)}$ and obtain
\begin{align}\label{fkZero}
	\sum_{\{k\in\Z:2^k<|f(x)|\}}2^{kp(x)}=\sum_{k=-\infty}^{k_x}\left(2^{-p(x)}\right)^{-k}=2^{k_xp(x)}\frac{1}{1-2^{-p(x)}}.
\end{align}
Using $1\leq\frac{1}{1-2^{-p(x)}}$ we get from \eqref{fkZero}
\begin{align*}
	\varrho_{\ell_\p(L_\p)}\left((2^k\chi_{\{|f(x)|>2^k\}})_{k=-\infty}^\infty\right)&=\int_\Rn\sum_{\{k\in\Z:2^k<|f(x)|\}}2^{kp(x)}dx\\
	&\geq\int_\Rn2^{k_xp(x)}dx=\int_\Rn2^{(k_x+1)p(x)}2^{-p(x)}dx\\
	&\ge\int_\Rn|\frac12f(x)|^{p(x)}dx=\varrho_\p(f/2).
\end{align*}
The converse inequality uses again \eqref{fkZero} with $\frac{1}{1-2^{-p(x)}}\leq\frac{1}{1-2^{-p^-}}$ and follows in a similar way
\begin{align*}
	\varrho_{\ell_\p(L_\p)}\left((2^k\chi_{\{|f(x)|>2^k\}})_{k=-\infty}^\infty\right)&=\int_\Rn\sum_{\{k\in\Z:2^k<|f(x)|\}}2^{kp(x)}dx\\
	&\hspace{-5em}=\int_\Rn2^{k_xp(x)}\frac{1}{1-2^{-p(x)}}dx\leq\int_\Rn2^{k_xp(x)}\frac{1}{1-2^{-p^-}}dx\\
	&\hspace{-5em}\leq\int_\Rn|f(x)|^{p(x)}\left(\frac{1}{1-2^{-p^-}}\right)^{\frac{p(x)}{p^-}}dx=\varrho_\p(cf),
\end{align*} 
with $c=(1-2^{-p^-})^{-1/p^-}$.

Now, we come back to the case, where $|\Rn_{\!\!\!\infty}|>0$. First, we split our function $f=f_0+f_\infty:=f\cdot\chi_{{\Rn_{0}}}+f\cdot\chi_{\Rn_{\!\!\!\infty}}$. Then we use the considerations above and
\begin{align*}
	\norm{f_0}{\Lp}+\norm{f_\infty}{\Lp}&\leq 2\norm{f_0+f_\infty}{\Lp},
	\intertext{see \cite[Remark 3.2.3]{DHHR}, and}
	\norm{f_0}{L_{\p,\p}(\Rn)}+\norm{f_\infty}{L_{\p,\p}(\Rn)}&\leq 2\norm{f_0+f_\infty}{L_{\p,\p}(\Rn)},
\end{align*}
	which is implied by
\begin{align*}
	\varrho_{\ell_\p(L_\p)}((2^k\chi_{\{|f_0+f_\infty|>2^k\}})_{k=-\infty}^\infty)&=\varrho_{\ell_\p(L_\p)}((2^k\chi_{\{|f_0|>2^k\}})_{k=-\infty}^\infty)\\
																					&+\varrho_{\ell_\p(L_\p)}((2^k\chi_{\{|f_\infty|>2^k\}})_{k=-\infty}^\infty).
\end{align*} 
\end{proof}
Next we show that the new scale of function spaces satisfies some elementary embeddings, which are very well know in the case of constant exponents.
\begin{thm}\label{thm:embedding}
\begin{itemize}
\item[(i)] Let $p,q_0,q_1\in \P$ with $q_0(\cdot)\leq q_1(\cdot)$ pointwise. Then $L_{\p,q_0(\cdot)}(\Rn)\hookrightarrow L_{\p,q_1(\cdot)}(\Rn)$.
\item[(ii)] Let $q\in\P$. Then $L_{\infty,q(\cdot)}(\R^n)=L_{\infty}(\R^n)$.
\item[(iii)] Let $p_0,p_1,q_0,q_1\in \P$ with $p_0^+<\infty$ and $\alpha:=(p_1/p_0)^->1$. Then the inequality 
\begin{equation}\label{eq:bddsupp1}
\|f|L_{p_0(\cdot),q_0(\cdot)}\| \le c \|f|L_{p_1(\cdot),q_1(\cdot)}\|
\end{equation}
holds for all measurable $f$ with $\supp f\subset [0,1]^n$ with $c$ independent of $f$.
\item[(iv)] Let $p_0,p_1,q\in\P$ with $p_0(\cdot)\le p_1(\cdot)$ pointwise. Then the inequality 
\begin{equation}\label{eq:bddsupp3}
\|f|L_{p_0(\cdot),q(\cdot)}\| \le c \|f|L_{p_1(\cdot),q(\cdot)}\|
\end{equation}
holds for all measurable $f$ with $\supp f\subset [0,1]^n$ with $c$ independent of $f$.
\end{itemize}
\end{thm}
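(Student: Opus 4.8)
The plan is to reduce all four statements, via Lemma~\ref{lem:equiv}, to assertions about the sequence $f_k:=2^k\chi_{A_k}$, with $A_k:=\{x\in\Rn:|f(x)|>2^k\}$, in the Almeida--H\"ast\"o spaces $\ellqp$, and then to play off monotonicity of these spaces in the exponents against the bounded support. Part~(i) is immediate: the embedding $\ell_{q_0(\cdot)}(L_{p(\cdot)})\hookrightarrow\ell_{q_1(\cdot)}(L_{p(\cdot)})$ for $q_0(\cdot)\le q_1(\cdot)$ is a basic property of these spaces established in \cite{AlHa10}, and applying it to $(f_k)_k$ together with \eqref{def:norm2} gives the claim. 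For~(ii) I would first note, for a \emph{constant} $q$, that $\norm{\chi_A}{L_\infty(\Rn)}$ equals $1$ when $|A|>0$ and $0$ otherwise, so that Definition~\ref{dfn2} gives $\norm{f}{L_{\infty,q}(\Rn)}\approx\sup\{2^k:|A_k|>0\}=\norm{f}{L_\infty(\Rn)}$ by summation of a geometric series (while $L_{\infty,\infty}(\Rn)=L_\infty(\Rn)$ is contained in the already proved identity $L_{\p,\p}(\Rn)=\Lp$). The variable case then follows from the sandwich $L_\infty(\Rn)=L_{\infty,q^-}(\Rn)\hookrightarrow L_{\infty,q(\cdot)}(\Rn)\hookrightarrow L_{\infty,\infty}(\Rn)=L_\infty(\Rn)$, both embeddings being instances of~(i).

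The substance is in~(iii). Using~(i), $L_{p_1(\cdot),q_1(\cdot)}(\Rn)\hookrightarrow L_{p_1(\cdot),\infty}(\Rn)$, so by homogeneity it suffices to bound $\norm{f}{L_{p_0(\cdot),q_0(\cdot)}(\Rn)}$ by $M:=\norm{f}{L_{p_1(\cdot),\infty}(\Rn)}=\sup_k2^k\norm{\chi_{A_k}}{L_{p_1(\cdot)}(\Rn)}$, and after rescaling we may take $M=1$, i.e.\ $\norm{\chi_{A_k}}{L_{p_1(\cdot)}(\Rn)}\le2^{-k}$ for all $k$. The key point is the elementary inequality
\begin{equation*}
\norm{\chi_A}{L_{p_0(\cdot)}(\Rn)}\le\norm{\chi_A}{L_{p_1(\cdot)}(\Rn)}^{\alpha}\qquad\text{for }A\subset[0,1]^n,\ \norm{\chi_A}{L_{p_1(\cdot)}(\Rn)}\le1 .
\end{equation*}
If the right-hand norm equals $1$ this is trivial since $\norm{\chi_A}{L_{p_0(\cdot)}(\Rn)}\le1$; otherwise it is $<1$, hence $|A\cap\{x:p_1(x)=\infty\}|=0$, so on $A$ we have $p_0(\cdot),p_1(\cdot)<\infty$, and --- using $p_0^+<\infty$ for continuity of the modular --- the norms $\lambda_i:=\norm{\chi_A}{L_{p_i(\cdot)}(\Rn)}$ satisfy $\int_A\lambda_0^{-p_0(x)}\,dx=1$ and $\int_A\lambda_1^{-p_1(x)}\,dx\le1$; since $p_1(x)\ge\alpha p_0(x)$ and $\lambda_1\le1$ we get $\lambda_1^{-p_1(x)}\ge(\lambda_1^{\alpha})^{-p_0(x)}$ on $A$, whence $\int_A(\lambda_1^{\alpha})^{-p_0(x)}\,dx\le1=\int_A\lambda_0^{-p_0(x)}\,dx$ and thus $\lambda_0\le\lambda_1^{\alpha}$. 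Applying this with $A=A_k$ gives $\norm{\chi_{A_k}}{L_{p_0(\cdot)}(\Rn)}\le2^{-k\alpha}$ for $k\ge0$ and $\le1$ for $k\le0$, so $\norm{f_k}{L_{p_0(\cdot)}(\Rn)}$ decays geometrically as $k\to\pm\infty$ with rate $\min(1,\alpha-1)>0$. A sequence whose $L_{p_0(\cdot)}$-norms decay geometrically has finite $\ell_{q_0(\cdot)}(L_{p_0(\cdot)})$-quasi-norm, controlled by the decay rate and $q_0^-$ --- precisely the estimate used in the completeness proof above, cf.\ \cite[Theorem~3.8]{AlHa10} --- and this yields \eqref{eq:bddsupp1}.

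For~(iv) there is no exponent gap, but also no change in the second exponent, and one argues more directly. Writing the modular in the simpler form \eqref{EasyModular} (legitimate at least when $q^+<\infty$; the general case is an analogous but more technical computation), one has, for each $k$, that $\varphi_{q(\cdot)}(|f_k|)$ is supported in $A_k\subset[0,1]^n$, so by the classical embedding $L_{(p_1/q)(\cdot)}(\Omega)\hookrightarrow L_{(p_0/q)(\cdot)}(\Omega)$ of variable Lebesgue spaces on the bounded domain $\Omega=[0,1]^n$ (see e.g.\ \cite{DHHR,KoRa}),
\begin{equation*}
\varrho_{\ell_{q(\cdot)}(L_{p_0(\cdot)})}\bigl((f_k)_k\bigr)=\sum_{k\in\Z}\norm{\varphi_{q(\cdot)}(|f_k|)}{L_{(p_0/q)(\cdot)}(\Rn)}\le c\sum_{k\in\Z}\norm{\varphi_{q(\cdot)}(|f_k|)}{L_{(p_1/q)(\cdot)}(\Rn)}=c\,\varrho_{\ell_{q(\cdot)}(L_{p_1(\cdot)})}\bigl((f_k)_k\bigr),
\end{equation*}
and \eqref{eq:bddsupp3} follows by the usual passage from modulars to quasi-norms.

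The step I expect to cost the most effort is the last one in~(iii): quantifying the $\ell_{q_0(\cdot)}(L_{p_0(\cdot)})$-quasi-norm of a geometrically decaying sequence when $q_0(\cdot)$ is genuinely variable; and, throughout (iii) and (iv), the bookkeeping for points where $p_0(\cdot)$, $p_1(\cdot)$ or $q(\cdot)$ equals $\infty$, where the clean modular identities must be supplemented by the splitting device already used in the proof of $L_{\p,\p}(\Rn)=\Lp$. Everything else is monotonicity of the spaces in their exponents plus a geometric series.
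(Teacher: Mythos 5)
Your treatment of parts (i)--(iii) is essentially the paper's own: (i) is the monotonicity of $\ellqp$ in $\q$ from \cite{AlHa10} via Lemma~\ref{lem:equiv}; (ii) is the sandwich between $L_{\infty,q^-}(\Rn)$ and $L_{\infty,\infty}(\Rn)$; and (iii) is the reduction to $q_1(\cdot)=\infty$ with $\norm{f}{L_{p_1(\cdot),\infty}}=1$, the key inequality $\norm{\chi_A}{L_{p_0(\cdot)}(\Rn)}\le\norm{\chi_A}{L_{p_1(\cdot)}(\Rn)}^\alpha$, and a geometric summation after passing to the constant exponent $q_0^-$. In fact you give \emph{more} than the paper at the one nontrivial point of (iii): the paper asserts \eqref{eq:bddsupp2} as a ``simple fact'' without proof, whereas your modular argument (disposing of the case $\norm{\chi_A}{L_{p_1(\cdot)}}=1$ by $|A|\le 1$, using $p_0^+<\infty$ to get $\varrho_{p_0(\cdot)}(\chi_A/\lambda_0)=1$, and the pointwise comparison $\lambda_1^{-p_1(x)}\ge(\lambda_1^{\alpha})^{-p_0(x)}$ for $\lambda_1\le1$ and $p_1\ge\alpha p_0$) is a correct and complete proof of it. Part (iv) is where you genuinely diverge: you compare the modulars termwise through the bounded-domain embedding $L_{(p_1/q)(\cdot)}([0,1]^n)\hookrightarrow L_{(p_0/q)(\cdot)}([0,1]^n)$ applied to $\varphi_{q(\cdot)}(|f_k|)$, while the paper splits the sequence into $k\le 0$ (controlled by the support alone, after passing to $\ell_{q^-}(L_{p_0(\cdot)})$) and $k\ge 1$ (controlled by a direct norm comparison in $\p$). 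Your route is shorter and the modular-to-norm passage you invoke does work, since $\varrho_{\ell_\q(L_\p)}((g_k/\mu)_k)\le \mu^{-q^-}\varrho_{\ell_\q(L_\p)}((g_k)_k)$ for $\mu\ge1$; but it genuinely requires the representation \eqref{EasyModular}, i.e.\ $q^+<\infty$ or $\q\le\p$. The case $q^+=\infty$ with $\q\not\le\p$, which you defer as ``an analogous but more technical computation'', is the one real hole in your write-up: the modular is then the infimum expression and the termwise Lebesgue embedding no longer transfers to it in the same way. The cheapest repair is exactly the paper's device --- split into $k\le0$ and $k\ge1$, handle $k\le0$ by the bounded support together with part~(i), and handle $k\ge1$ by the $\alpha=1$ instance of your key inequality --- so you may wish to fold that splitting into your argument rather than leave the general case to the reader.
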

\begin{proof}
The first statement follows from Lemma \ref{lem:equiv} and the embedding $\ell_{q_0(\cdot)}(L_\p)\hookrightarrow\ell_{q_1(\cdot)}(L_\p)$ which has been proven in \cite{AlHa10}.\\
To prove the second part of the theorem, it is enough to use the above embedding in the form
\begin{align*}
L_{\infty,q^-}(\Rn)\hookrightarrow L_{\infty,\q}(\Rn)\hookrightarrow L_{\infty,\infty}(\Rn)=L_\infty(\Rn)
\end{align*}
and the simple embedding $L_\infty(\Rn)\hookrightarrow L_{\infty,q^-}(\Rn)$, which follows directly from Definition \ref{dfn3} and Lemma \ref{lem:equiv}.\\
The proof of the third statement is based on the following simple fact. For every $A\subset\R^n$ with $\mu(A)\le 1$ the following inequality holds
\begin{equation}\label{eq:bddsupp2}
\|\chi_A|L_{p_0(\cdot)}\|\le \|\chi_A|L_{p_1(\cdot)}\|^\alpha,
\end{equation}
where again $\alpha=(p_1/p_0)^->1.$ 

To show \eqref{eq:bddsupp1}, it is enough to assume that $q_1(\cdot)=\infty$ and 
\begin{equation*}
\|f|L_{p_1(\cdot),\infty}(\R^n)\|\approx \sup_{k\in\Z} 2^k\|\chi_{\{|f|>2^k\}}|L_{p_1(\cdot)}(\R^n)\|=1.
\end{equation*}
Using \eqref{eq:bddsupp2}, we obtain
\begin{align*}
\|f|L_{p_0(\cdot),q_0(\cdot)}\|^{q_0^-}&\lesssim \|f|L_{p_0(\cdot),q_0^-}\|^{q_0^-}\le \sum_{k=-\infty}^\infty 2^{kq_0^-}\|\chi_{\{|f|>2^k\}}|L_{p_0(\cdot)}(\R^n)\|^{q_0^-}\\
&\le \sum_{k=-\infty}^0 2^{kq_0^-}+\sum_{k=1}^\infty 2^{kq_0^-}\|\chi_{\{|f|>2^k\}}|L_{p_1(\cdot)}(\R^n)\|^{\alpha q_0^-}\\
&\le c+\sum_{k=1}^\infty 2^{kq_0^-}(2^{-k}\|f|L_{p_1(\cdot),\infty}(\R^n)\|)^{\alpha q_0^-}\le c'
\end{align*}
with an obvious modification if $q_0^-=\infty.$ This justifies \eqref{eq:bddsupp1}.

The proof of the forth statement follows a similar pattern. We start with $f$ such that 
$$
\|f|L_{p_1(\cdot),q(\cdot)}(\R^n)\| \approx \norm{\left(2^k\chi_{\{x\in\Rn:|f(x)|>2^k\}}\right)_{k=-\infty}^\infty}{\ell_{q(\cdot)}(L_{p_1(\cdot)})}=1.
$$
We use again the splitting into two parts, namely with $k\le 0$ and $k\ge 1$, respectively. In the first case, we use the bounded support of $f$ to obtain.
\begin{align*}
\|(2^k\chi_{\{|f|>2^k\}})_{k=-\infty}^0|\ell_{q(\cdot)}(L_{p_0(\cdot)})\|&\le \|(2^k\chi_{[0,1]^n})_{k=-\infty}^0|\ell_{q(\cdot)}(L_{p_0(\cdot)})\|\\
&\lesssim \|(2^k\chi_{[0,1]^n})_{k=-\infty}^0|\ell_{q^-}(L_{p_0(\cdot)})\|\lesssim 1
\end{align*}
The second part with $k\in \N$ may be estimated directly as
\begin{align*}
\|(2^k\chi_{\{|f|>2^k\}})_{k=1}^\infty|\ell_{q(\cdot)}(L_{p_0(\cdot)})\|\le
\|(2^k\chi_{\{|f|>2^k\}})_{k=1}^\infty|\ell_{q(\cdot)}(L_{p_1(\cdot)})\|\lesssim  1,
\end{align*}
which finishes the proof of \eqref{eq:bddsupp3}.
\end{proof}

\begin{rem}
The second part of this theorem is in contrast to \cite[Section 1.4.2]{Grafaneu}, where $L_{\infty,q}(\Rn)=\{0\}$ is stated.
But this is also not surprising since we did not take the extra factor $p^{1/q}$ appearing in \eqref{Formel1} and \eqref{Formel2} into our Definitions \ref{dfn2} and \ref{dfn3} of our variable Lorentz spaces.
\end{rem}

\section{Interpolation}
We stated already in the introduction that the main importance of Lorentz spaces lies in their connection with (real) interpolation theory. In this section, we shall explore the interpolation properties
of Lorentz spaces with variable exponents. But before we come to this, we recall some basics of the interpolation theory, as they may be found for example in the classical monographs \cite{BerghL} and \cite{Triebel}.

We shall touch only the two most important interpolation methods - the real interpolation and the complex interpolation. Complex interpolation of variable exponent spaces has already been treated in \cite{DHN}.
It turned out that the expected result
\begin{align*}
[L_{p_0(\cdot)}(\Rn),L_{p_1(\cdot)}(\Rn)]_{\theta}=L_{p_\theta(\cdot)}(\Rn)
\end{align*} 
does hold for all $0<\theta<1$ and all $\frac1{p_\theta(\cdot)}=\frac{1-\theta}{p_0(\cdot)}+\frac{\theta}{p_1(\cdot)}$ with $p_0^-,p_1^-\geq1$.\\
This complex interpolation result has been complemented in \cite{Kop09} by showing
\begin{align*}
	[L_\p(\Rn), BMO(\Rn)]_\theta=L_{p_\theta(\cdot)}(\Rn)\quad\text{and}\quad[L_\p(\Rn), H_1(\Rn)]_\theta=L_{p_\theta(\cdot)}(\Rn)
\end{align*}
under some regularity conditions on $\p$. 

We shall therefore concentrate on the real interpolation method (the so-called \emph{K-method}). Let $X_0$ and $X_1$ be two (quasi-)Banach spaces, which are both embedded into a topological vector space $Y$.
Then the spaces $X_0+X_1$ is defined as the set of all $x\in Y$, which may be written as $x=x_0+x_1$ with $x_0\in X_0$ and $x_1\in X_1.$

For any $x\in X_0+X_1$ and any $0<t<\infty$, the so-called \emph{Peetre K-functional} is defined by
\begin{equation}\label{eq:K}
K(x,t,X_0,X_1)=\inf\{\|x_0|X_0\|+t\|x_1|X_1\|:x=x_0+x_1, x_0\in X_0, x_1\in X_1\}.
\end{equation}
If the spaces $X_0$ and $X_1$ are fixed and no confusion is possible, then we abbreviate this to $K(x,t)$. If $0<\theta<1$ and $0<q\le\infty$, then the \emph{real interpolation space} $(X_0,X_1)_{\theta,q}$ is defined
as the set of all $x\in X_0+X_1$, such that
\begin{align*}
\|x|(X_0,X_1)_{\theta,q}\|=\begin{cases} \displaystyle \biggl(\int_0^\infty t^{-\theta q} K(x,t)^q \frac{dt}{t}\biggr)^{1/q},\quad &\text{if\ }q<\infty,\\
\displaystyle \sup_{t>0}t^{-\theta}K(x,t),&\text{if\ }q=\infty
\end{cases}
\end{align*}
is finite. 

\begin{thm}\label{thm:interpol} Let $p,q_0\in\P$ with $p^+<\infty$. Let $0<q\le\infty$ and $0<\theta<1$ and put
$$
\frac{1}{\tilde p(\cdot)}=\frac{1-\theta}{p(\cdot)}.
$$
Then
\begin{equation}\label{eq:interpol1}
(L_{\p,q_0(\cdot)}(\R^n),L_\infty(\R^n))_{\theta,q}=L_{\tilde p(\cdot),q}(\R^n)
\end{equation}
in the sense of equivalent quasi-norms.
\end{thm}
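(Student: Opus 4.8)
The plan is to identify the $K$-functional for the couple $(L_{\p,q_0(\cdot)}(\Rn),L_\infty(\Rn))$ with an explicit expression built from the truncations of $f$, and then to feed this expression into the definition of the real interpolation norm. The standard heuristic from the constant-exponent case is that, given $f\in L_{\p,q_0(\cdot)}+L_\infty$, the optimal (or nearly optimal) decomposition at scale $t$ is obtained by cutting $f$ at a suitable height: write $f=g_\lambda+h_\lambda$ where $h_\lambda=\operatorname{sign}(f)\min(|f|,\lambda)$ is the part living in $L_\infty$ with $\|h_\lambda|L_\infty\|\le\lambda$, and $g_\lambda=\operatorname{sign}(f)(|f|-\lambda)_+$ is the tail. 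The first step is therefore to prove the two-sided estimate
\begin{equation*}
K(f,t,L_{\p,q_0(\cdot)},L_\infty)\approx \inf_{\lambda>0}\Bigl(\|(|f|-\lambda)_+|L_{\p,q_0(\cdot)}\|+t\lambda\Bigr),
\end{equation*}
and then to simplify the right-hand side. Using the equivalent description of $L_{\p,q_0(\cdot)}$ from Lemma \ref{lem:equiv} in terms of the sequence $(2^k\chi_{\{|f|>2^k\}})_k$, one sees that $(|f|-\lambda)_+$ essentially only sees the indices $k$ with $2^k\gtrsim\lambda$, so choosing $\lambda=2^j$ one expects
\begin{equation*}
K(f,2^{-j(1-\theta)\cdot\text{(something)}},\dots)\approx \|(2^k\chi_{\{|f|>2^k\}})_{k\ge j}|\ell_{q_0(\cdot)}(L_\p)\|+2^j t .
\end{equation*}
The cleanest route is to discretize $t=2^{-m}$ from the outset and reduce everything to a statement about the sequence space $\ell_{q_0(\cdot)}(L_\p)$.

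**Key steps in order.** First I would establish the $K$-functional estimate above; the lower bound is the routine direction (any decomposition $f=g+h$ with $\|h|L_\infty\|\le\lambda$ forces $|g|\ge(|f|-\lambda)_+$ pointwise, hence controls the $L_{\p,q_0(\cdot)}$-term by monotonicity of the quasi-norm in the distribution function, which follows from Lemma \ref{lem:equiv}), while the upper bound is witnessed by the explicit truncation decomposition. Second, I would discretize: replace $\int_0^\infty t^{-\theta q}K(f,t)^q\,\frac{dt}{t}$ by $\sum_{m\in\Z}2^{m\theta q}K(f,2^{-m})^q$ (with the usual $\sup$ modification if $q=\infty$), and insert the discretized $K$-functional. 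Third — this is the combinatorial heart — I would show that
\begin{equation*}
\Bigl(\sum_{m\in\Z}2^{m\theta q}\,\bigl(\inf_{j\in\Z}\bigl[\|(2^k\chi_{\{|f|>2^k\}})_{k\ge j}|\ell_{q_0(\cdot)}(L_\p)\|+2^{j-m}\bigr]\bigr)^q\Bigr)^{1/q}\approx \|(2^j\chi_{\{|f|>2^j\}})_j|\ell_q(L_{\tilde p(\cdot)})\|,
\end{equation*}
and then invoke Lemma \ref{lem:equiv} once more (now for the target space, noting $q$ is constant so the outer $\ell_q$ is a genuine iterated norm by Proposition 3.3 of \cite{AlHa10}) to recognize the right-hand side as $\|f|L_{\tilde p(\cdot),q}(\Rn)\|$. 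The scaling relation $\frac{1}{\tilde p(\cdot)}=\frac{1-\theta}{p(\cdot)}$ enters precisely here: for a single characteristic function one has the homogeneity $\|\chi_A|L_{\tilde p(\cdot)}\|$ versus $\|\chi_A|L_\p\|$, which for sets of controlled measure behaves like raising to the power $1-\theta$, and this is what makes the two sides match after summing the geometric-type series in $j$ and $m$.

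**Main obstacle.** The delicate point is Step three: the interchange between the "tail" sums $\sum_{k\ge j}$ inside $\ell_{q_0(\cdot)}(L_\p)$ and the outer summation over $m$, carried out with a \emph{variable} inner exponent $q_0(\cdot)$ and a \emph{variable} integrability $\p$. With constant exponents this is the classical Hardy-type inequality / the standard computation that $(\ell^{q_0}(\text{tails}))$ interpolates to $\ell^q$ with the shift in the power weight; with $q_0(\cdot)$ variable one cannot simply pull exponents through sums, and one must work at the level of the modular $\varrho_{\ell_{q_0(\cdot)}(L_\p)}$ and use the quasi-norm properties (the constant $\varrho$ from \cite[Theorem 3.8]{AlHa10} used already in the completeness proof) together with the embedding $\ell_{q^-}\hookrightarrow\ell_{q_0(\cdot)}\hookrightarrow\ell_{q^+}$ to sandwich the variable-exponent tail norms between constant-exponent ones. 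I expect that the condition $p^+<\infty$ is used exactly to guarantee that $\|\chi_A|L_\p\|\to 0$ as $\mu(A)\to 0$ uniformly enough for the geometric series over the "small" indices ($k<j$, equivalently the $L_\infty$-dominated part) to converge, mirroring how bounded support was exploited in Theorem \ref{thm:embedding}(iii)--(iv). Once the norm estimate is in place for functions in one space, a density/Fatou argument upgrades it to the full identity of spaces.
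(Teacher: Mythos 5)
Your proposal follows essentially the same route as the paper's proof: a truncation-based two-sided estimate of the $K$-functional (cutting $f$ at height $\mu$), dyadic discretization in both $t$ and the cut height, the exact homogeneity $\|\chi_A|L_{\tilde p(\cdot)}(\Rn)\|=\|\chi_A|L_{\p}(\Rn)\|^{1-\theta}$, a Hardy-type summation of the resulting geometric series (with H\"older's inequality when $(1-\theta)q>1$), and monotonicity in the second Lorentz index to sandwich $q_0(\cdot)$ between constant exponents via the chain $L_{\p,q_0^-}\hookrightarrow L_{\p,q_0(\cdot)}\hookrightarrow L_{\p,\infty}$. The only real divergence is a detail of attribution: the paper uses $p^+<\infty$ to make the generalized inverse $f_*$ of $\lambda\mapsto\|\chi_{\{|f|\ge\lambda\}}|L_\p(\Rn)\|$ satisfy $h(f_*(t))\ge t$ in the lower bound for $K$, not to control norms of small sets as you conjecture.
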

\begin{proof}
To prove \eqref{eq:interpol1}, we will justify the following chain of embeddings.
\begin{align}
\notag L_{\tilde p(\cdot),q}(\R^n)&\hookrightarrow
(L_{\p,q_0^-}(\R^n),L_\infty(\R^n))_{\theta,q}\hookrightarrow
(L_{\p,q_0(\cdot)}(\R^n),L_\infty(\R^n))_{\theta,q}\\
\label{eq:review1}&\hookrightarrow
(L_{\p,\infty}(\R^n),L_\infty(\R^n))_{\theta,q}\hookrightarrow
L_{\tilde p(\cdot),q}(\R^n).
\end{align}
The second and third embedding in \eqref{eq:review1} follow by monotonicity, cf. Theorem \ref{thm:embedding}.
The last embedding in \eqref{eq:review1}, namely
\begin{equation}\label{eq:interpol2}
(L_{\p,\infty}(\R^n),L_\infty(\R^n))_{\theta,q}\hookrightarrow L_{\tilde p(\cdot),q}(\R^n),
\end{equation}
will be proven in Step 1.
Finally, in Step 2 we shall prove that
$$
L_{\tilde p(\cdot),q}(\R^n)\hookrightarrow
(L_{\p}(\R^n),L_\infty(\R^n))_{\theta,q}.
$$
The proof of this embedding only works with norms of characteristic functions, which do not depend on the second parameter of the Lorentz space.
This is very well known for classical Lorentz spaces, follows from Definition \ref{dfn2} for Lorentz spaces with variable $p(\cdot)$ and $q$ constant,
and finally follows by monotonicity also for Lorentz spaces with both indices variable. Therefore, the proof given in Step 2 also justifies the first embedding in \eqref{eq:review1}.

\emph{Step 1.} We shall prove \eqref{eq:interpol2}, i.e. that
\begin{equation}\label{eq:interpol3}
\int_0^\infty \lambda^q\norm{\chi_{\{x\in\Rn:|f(x)|> \lambda\}}}{L_{\tilde p(\cdot)}(\R^n)}^q\frac{d\lambda}{\lambda}\lesssim
\int_0^\infty t^{-\theta q} K(f,t)^q \frac{dt}{t}.
\end{equation}
We shall use that
\begin{align*}
K(f,t)&=\inf\{\|f^0\|_{\p,\infty}+t\|f^1\|_\infty:f=f^0+f^1\}\\
&=\inf_{\mu>0}\{\|(|f(x)|-\mu)_+\|_{p(\cdot),\infty}+t\|\min(|f(x)|,\mu)\|_\infty\}\\
&\ge \inf_{\mu>0}\{\mu \|\chi_{\{x\in\Rn:|f(x)|\ge 2\mu\}}\|_{p(\cdot)}+t\|\min(f(x),\mu)\|_\infty\}
\end{align*}
for every fixed $t>0$.

We denote $h(\lambda)=\|\chi_{\{x\in\Rn:|f(x)|\ge \lambda\}}\|_{\p}$ for $\lambda>0$ and $f_*(t)=\sup\{\lambda>0:h(\lambda)\ge t\}$ its generalized inverse function.
Using the assumption $p^+<\infty$, we obtain that $h(f_*(t))\ge t$ for all $t>0$.
Then we choose $\mu$ by $\mu=f_*(t)/2$. This leads to $K(f,t)\ge f_*(t)h(f_*(t))/2\ge tf_*(t)/2.$ 
The proof is then a consequence of the following two estimates on the left and right hand side of \eqref{eq:interpol3}
\begin{align*}
LHS\eqref{eq:interpol3}&=\int_0^\infty \lambda^q h(\lambda)^{(1-\theta)q}\frac{d\lambda}{\lambda}\approx \sum_{k=-\infty}^\infty \int_{\lambda:2^k< h(\lambda)\le2^{k+1}}\lambda^q h(\lambda)^{(1-\theta)q}\frac{d\lambda}{\lambda}\\
&\lesssim \sum_{k=-\infty}^\infty 2^{k(1-\theta)q}\int_{\lambda:2^k\le h(\lambda)}\lambda^q \frac{d\lambda}{\lambda}
\le \sum_{k=-\infty}^\infty 2^{k(1-\theta)q}\int_{0}^{f_*(2^k)}\lambda^q \frac{d\lambda}{\lambda}\lesssim \sum_{k=-\infty}^\infty 2^{k(1-\theta)q}f_*(2^k)^q
\end{align*}
and
\begin{align*}
RHS\eqref{eq:interpol3}&\ge \sum_{k=-\infty}^\infty \int_{2^k}^{2^{k+1}}t^{(1-\theta)q}f_*(t)^q\frac{dt}{t}\gtrsim \sum_{k=-\infty}^\infty 2^{k(1-\theta)q}\int_{2^k}^{2^{k+1}}f_*(t)^q\frac{dt}{t}\\
&\gtrsim\sum_{k=-\infty}^\infty 2^{(k+1)(1-\theta)q}f_*(2^{k+1})^q =\sum_{k=-\infty}^\infty 2^{k(1-\theta)q}f_*(2^{k})^q.
\end{align*}
If $q=\infty$, only notational modifications are necessary.

\emph{Step 2.} Next, we prove that
\begin{equation}\label{eq:interpol10}
L_{\tilde p(\cdot),q}(\R^n)\hookrightarrow (L_{\p}(\R^n),L_\infty(\R^n))_{\theta,q},
\end{equation}
i.e.
\begin{equation}\label{eq:interpol11}
\int_0^\infty t^{-\theta q} K(f,t)^q \frac{dt}{t}\lesssim\int_0^\infty \lambda^q\norm{\chi_{\{x\in\Rn:|f(x)|> \lambda\}}}{L_{\tilde p(\cdot)}(\R^n)}^q\frac{d\lambda}{\lambda}.
\end{equation}
We start again with a reformulation of $K(f,t)$.
\begin{align*}
K(f,t)&=\inf\{\|f^0\|_{\p}+t\|f^1\|_\infty:f=f^0+f^1\}\\
&=\inf_{\mu>0}\{\|(|f(x)|-\mu)_+\|_{p(\cdot)}+t\|\min(|f(x)|,\mu)\|_\infty\}\\
&\le \inf_{\mu>0}\{\|f\chi_{\{x\in\Rn:|f(x)|> \mu\}}\|_{p(\cdot)}+t\mu\}\\
&\lesssim \inf_{\mu>0}\{\|\sum_{j=0}^\infty 2^j\mu\chi_{\{x\in\Rn:|f(x)|> 2^j\mu\}}\|_{p(\cdot)}+t\mu\}\\
&\le \inf_{\mu>0}\{\sum_{j=0}^\infty 2^j\mu\|\chi_{\{x\in\Rn:|f(x)|> 2^j\mu\}}\|_{p(\cdot)}+t\mu\}\\
&= \inf_{\mu>0}\{\sum_{j=0}^\infty 2^j\mu h(2^j\mu)+t\mu\},
\end{align*}
where we denoted $h(\lambda):=\|\chi_{\{x\in\Rn:|f(x)|> \lambda\}}\|_{p(\cdot)}$. Let us remark, that we have assumed $p^-\ge 1$ in the calculation above to be able to use the triangle inequality without additional powers.
The modification in the case $p^-<1$ is straightforward and left to the reader.

For fixed $t>0$, we choose $\mu=\mu(t)$ by 
$$
\mu(t):=\inf\{\mu>0:\sum_{j=0}^\infty 2^j h(2^j\mu)\le t\}.
$$
As the function $h$ is right-continuous, we obtain immediately $\sum_{j=0}^\infty 2^j h(2^j\mu(t))\le t$.
We first estimate the right-hand side of \eqref{eq:interpol11} as
\begin{align*}
RHS\eqref{eq:interpol11} \gtrsim \sum_{k=-\infty}^\infty 2^{kq}h(2^k)^{(1-\theta)q}.
\end{align*}
Furthermore, we discretize the left-hand side of \eqref{eq:interpol11} as
\begin{align*}
LHS\eqref{eq:interpol11}&\le\int_0^\infty t^{-\theta q} t^q\mu(t)^q \frac{dt}{t}
=\sum_{k=-\infty}^\infty 2^{kq}\int_{t:2^k< \mu(t)\le 2^{k+1}}t^{(1-\theta)q}\frac{dt}{t}\\
&\le \sum_{k=-\infty}^\infty 2^{kq}\int_{t:2^k< \mu(t)}t^{(1-\theta)q}\frac{dt}{t}.
\end{align*}
If $\mu(t)>2^k$, we obtain
$$
t\le \sum_{j=0}^\infty 2^j h(2^{j+k}).
$$
Therefore, we continue
\begin{align*}
LHS\eqref{eq:interpol11}&\lesssim \sum_{k=-\infty}^\infty 2^{kq}\int_{0}^{\sum_{j=0}^\infty 2^j h(2^{j+k})}t^{(1-\theta)q}\frac{dt}{t}
\lesssim \sum_{k=-\infty}^\infty 2^{kq} \left(\sum_{j=0}^\infty 2^j h(2^{j+k})\right)^{(1-\theta)q}.
\end{align*}
If $(1-\theta)q\le 1$, we may write ($l=j+k$)
\begin{align*}
LHS\eqref{eq:interpol11}&\lesssim \sum_{k=-\infty}^\infty 2^{kq} \sum_{j=0}^\infty 2^{j(1-\theta)q} h(2^{j+k})^{(1-\theta)q}\\
&=\sum_{l=-\infty}^\infty \sum_{j=0}^\infty 2^{(l-j)q} 2^{j(1-\theta)q}h(2^l)^{(1-\theta)q}\\
&=\sum_{l=-\infty}^\infty 2^{lq}h(2^l)^{(1-\theta)q}\sum_{j=0}^\infty 2^{-j\theta q}\lesssim \sum_{l=-\infty}^\infty 2^{lq}h(2^l)^{(1-\theta)q}.
\end{align*}
If $(1-\theta)q>1$, we use a similar approach combined with H\"older's inequality
\begin{align*}
LHS\eqref{eq:interpol11}&\lesssim \sum_{k=-\infty}^\infty 2^{kq} \sum_{j=0}^\infty 2^{j(1+\varepsilon)(1-\theta)q} h(2^{j+k})^{(1-\theta)q}\\
&=\sum_{l=-\infty}^\infty \sum_{j=0}^\infty 2^{(l-j)q} 2^{j(1+\varepsilon)(1-\theta)q}h(2^l)^{(1-\theta)q}\\
&=\sum_{l=-\infty}^\infty 2^{lq}h(2^l)^{(1-\theta)q}\sum_{j=0}^\infty 2^{jq(-1 + (1+\varepsilon)(1-\theta))}\lesssim \sum_{l=-\infty}^\infty 2^{lq}h(2^l)^{(1-\theta)q},
\end{align*}
where we have assumed that $\varepsilon>0$ was small enough to obtain $-1 + (1+\varepsilon)(1-\theta)=-\theta+\varepsilon(1-\theta)<0$.
This finishes the proof.
\end{proof}
\begin{rem}\label{rem:4}
\begin{itemize} \item[(i)] 
Let us point out that the assumption $p^+<\infty$ is forced mainly be the technique of generalized inverse functions used in the proof of Theorem \ref{thm:interpol}.
We leave it as an open problem if this assumption might be removed.
\item[(ii)] Of course, taking $q_0(\cdot)=p(\cdot)$ in Theorem \ref{thm:interpol} leads immediately to the following special case
\begin{equation*}
(L_{\p}(\R^n),L_\infty(\R^n))_{\theta,q}=L_{\tilde p(\cdot),q}(\R^n).
\end{equation*}
Therefore the spaces $L_{\tilde p(\cdot),q}(\R^n)$ from Definition \ref{dfn2} naturally arise by real interpolation between $L_{\p}(\R^n)$ and $L_\infty(\R^n)$.
\end{itemize}
\end{rem}

\section{Marcinkiewicz interpolation theorem}\label{sec:Marcinkiewicz}

Let $T$ be an operator defined on measurable functions on $\R^n$. We say, that $T$ is sublinear, if
$$
|T(f+g)(x)|\le |Tf(x)|+|Tg(x)|
$$
holds for (almost) every $x\in\R^n$. One of the most important tools in analysis of (sub-)linear operators is the Marcinkiewicz interpolation theorem. Let us recall its statement
as it may be found for example in \cite[Corollary 1.4.21]{Grafaneu}.

\begin{thm}\label{Marcin} Let $\Omega\subset\R^n$ be a measurable set and let $T$ be a sublinear operator, which maps $L_{p_0}(\Omega)$ to $L_{q_0,\infty}(\Omega)$ and $L_{p_1}(\Omega)$ to $L_{q_1,\infty}(\Omega)$, where
$0<p_0\neq p_1\le\infty$ and $0<q_0\neq q_1\le\infty$. Let $0<\theta<1$ and put
$$
\frac{1}{p}:=\frac{1-\theta}{p_0}+\frac{\theta}{p_1},\quad \frac{1}{q}:=\frac{1-\theta}{q_0}+\frac{\theta}{q_1}.
$$
If
\begin{equation}\label{central}
p\le q,
\end{equation}
then $T$ maps boundedly $L_p(\Omega)$ into $L_q(\Omega)$.
\end{thm}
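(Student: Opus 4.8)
The plan is to run the classical decomposition-and-integration argument, estimating the distribution function $\mu_{Tf}$ level by level. By homogeneity I may normalise $\norm{f}{L_p(\Omega)}=1$, and by the symmetry $(p_0,q_0,\theta)\leftrightarrow(p_1,q_1,1-\theta)$ I may assume $q_0<q_1$, hence $q_0<q<q_1$; write $A_0,A_1$ for the two endpoint (quasi-)norms of $T$. Since $0<\theta<1$ and $p_0\neq p_1$, the exponent $p$ lies strictly between $p_0$ and $p_1$, so exactly one of them is smaller than $p$. For each $t>0$ I would split $f=f^0_t+f^1_t$ with $f^0_t=f\chi_{\{|f|>\delta(t)\}}$ (the large values) and $f^1_t=f\chi_{\{|f|\le\delta(t)\}}$ (the small values), where $\delta(t)$ is a suitable power of $t$ to be fixed at the end. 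The two pieces must be routed to the two endpoints so that each lands in the correct space: the large-value part goes to the $L_{p_i}$ with the \emph{smaller} $p_i$ and the small-value part to the one with the \emph{larger} $p_i$, using the elementary inequality $|f|^{p_i}\le|f|^{p}\,\delta(t)^{p_i-p}$ on the appropriate half of the range together with $\norm{f}{L_p(\Omega)}=1$.

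Sublinearity gives $|Tf|\le|Tf^0_t|+|Tf^1_t|$ and hence $\{|Tf|>t\}\subset\{|Tf^0_t|>t/2\}\cup\{|Tf^1_t|>t/2\}$, so the two weak-type hypotheses yield
$$
\mu_{Tf}(t)\le\left(\frac{2A_0}{t}\right)^{q_0}\norm{f^0_t}{L_{p_0}}^{q_0}+\left(\frac{2A_1}{t}\right)^{q_1}\norm{f^1_t}{L_{p_1}}^{q_1}
$$
(with the obvious reading when $q_0$ or $q_1$ equals $\infty$). Next I would write $\norm{Tf}{L_q(\Omega)}^q=q\int_0^\infty t^{q-1}\mu_{Tf}(t)\,dt$, insert this estimate, expand $\norm{f^i_t}{L_{p_i}}^{p_i}=\int_\Omega|f^i_t|^{p_i}$ by the layer-cake formula, and interchange the $t$-integration with the integration over $\Omega$. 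Evaluating the inner $t$-integrals then produces a bound of the form $C\,\norm{f}{L_p(\Omega)}^q=C$. Choosing $\delta(t)$ as the right power of $t$ (in the diagonal case $q_i=p_i$ just $\delta(t)\approx t$) is exactly what makes those powers of $t$ integrable near $0$ and near $\infty$, and this is the step in which the arithmetic relations for $1/p,1/q$ and the hypothesis $p\le q$ are used: $p\le q$ is precisely the condition under which the borderline $t$-integral converges rather than diverges.

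The degenerate cases are routine. If $q_1=\infty$ then $L_{q_1,\infty}(\Omega)=L_\infty(\Omega)$ and $T:L_{p_1}(\Omega)\to L_\infty(\Omega)$; here I would pick $\delta(t)$ small enough that $2A_1\norm{f^1_t}{L_{p_1}}\le t$, so that $\{|Tf^1_t|>t/2\}=\emptyset$ and only the $f^0_t$ term survives (symmetrically if $q_0=\infty$). The values $p_0=\infty$ or $p_1=\infty$ cause no trouble, since then $\norm{f^i_t}{L_\infty}\le\delta(t)$ is immediate, and the assumptions $p_0\neq p_1$, $q_0\neq q_1$ are used exactly to keep $\delta(t)$ a genuine non-constant power of $t$ and to place $p,q$ strictly between the endpoints. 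I should also mention the interpolation-theoretic alternative: the classical identities $(L_{p_0}(\Omega),L_{p_1}(\Omega))_{\theta,r}=L_{p,r}(\Omega)$ and $(L_{q_0,\infty}(\Omega),L_{q_1,\infty}(\Omega))_{\theta,r}=L_{q,r}(\Omega)$, valid for all $0<r\le\infty$, give $T:L_{p,r}(\Omega)\to L_{q,r}(\Omega)$, whence $r=p$ and the embedding $L_{q,p}(\Omega)\hookrightarrow L_{q,q}(\Omega)=L_q(\Omega)$ — valid exactly when $p\le q$ — conclude; but since the functoriality of the $K$-method for a merely \emph{sub}linear $T$ on the weak-type pair needs a small separate check, I would keep the direct argument as the main route.

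The hard part will be the bookkeeping in the middle: routing the two truncated pieces to the correct endpoints according to the order of $p_0,p_1$ relative to $p$, carrying the mismatched outer exponents $q_i/p_i$ (which are not equal to $1$ in the off-diagonal case) through the Fubini step — for instance via Minkowski's integral inequality or a reduction to characteristic functions — and verifying that the single free parameter in $\delta(t)$ can be tuned to make both tail integrals finite at once, which is exactly where $p\le q$ is needed.
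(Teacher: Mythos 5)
First, a point of comparison: the paper does not prove Theorem \ref{Marcin} at all --- it is quoted as a known classical result with a reference to \cite[Corollary 1.4.21]{Grafaneu} --- so your proposal is not competing with an argument in the text. Judged on its own, your sketch is the standard and correct proof in the diagonal case $p_0=q_0$, $p_1=q_1$: there $\|f^i_t\|_{L_{p_i}}^{q_i}=\int_\Omega|f^i_t|^{p_i}\,dx$, the interchange of integrations is a legitimate application of Fubini, and the homogeneity relations force $\delta(t)\approx t$ and produce $\|Tf\|_{L_q}^q\lesssim\|f\|_{L_p}^p$.

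In the off-diagonal case --- which is the one actually relevant to Section \ref{sec:Marcinkiewicz} of the paper --- your ``main route'' has a genuine gap at exactly the step you defer as bookkeeping. The quantity $\|f^i_t\|_{L_{p_i}}^{q_i}$ is the $(q_i/p_i)$-th power of an integral over $\Omega$, so the interchange of the $t$- and $x$-integrations is not Fubini; Minkowski's integral inequality repairs this only when $q_i/p_i\ge 1$, which is not assumed (and fails for the operator $T_{1/2}$ of Section 5, where $(p_0,q_0)=(2,1)$); and the ``reduction to characteristic functions'' is the restricted-to-strong-type upgrade, itself a nontrivial theorem. Relatedly, your localization of the hypothesis $p\le q$ is not correct: the convergence of the borderline $t$-integrals requires only $q_0<q<q_1$, which already follows from $0<\theta<1$ and $q_0\ne q_1$, independently of whether $p\le q$. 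What the decomposition argument actually yields is $T:L_p=L_{p,p}\to L_{q,p}$, and $p\le q$ is used exactly once, at the very end, through the embedding $L_{q,p}\hookrightarrow L_{q,q}=L_q$ --- precisely the mechanism in your ``interpolation-theoretic alternative'' paragraph. That alternative (Hunt's theorem: perform your decomposition at a height tied to the rearrangement $f^*$, estimate $(Tf)^*$, apply Hardy's inequalities to land in $L_{q,r}$ for every $r$, then take $r=p$ and embed) is the standard proof and should be promoted to the main route. The functoriality worry you raise about sublinear $T$ is not an obstruction there, because one never invokes abstract interpolation functoriality: sublinearity enters only through $\mu_{Tf}(t)\le\mu_{Tf^0_t}(t/2)+\mu_{Tf^1_t}(t/2)$, exactly as in your own display.
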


One of the prominent applications of Marcinkiewicz interpolation theorem was given by Stein in his classical book \cite{S}.
The Hardy-Littlewood maximal operator is defined for every locally-integrable function $f$ on $\R^n$ by
$$
Mf(x)=\sup_{B\ni x}\frac{1}{|B|}\int_B |f(x)|dx,\quad x\in\R^n,
$$
where the supremum is taken over all balls in $\R^n$ containing $x$. It is easy to see that $M$ acts boundedly from $L_\infty(\R^n)$ into $L_\infty(\R^n)$.
Furthermore, one shows that $M$ maps $L_1(\R^n)$ into $L_{1,\infty}(\R^n)$. These two facts, combined with Theorem \ref{Marcin}, lead immediately to the boundedness
of $M$ on $L_p(\R^n)$ for every $1<p\le\infty$.

The study of the maximal operator in the frame of Lebesgue spaces with variable exponents attracted a lot of attention with the most important breakthroughs being achieved in \cite{Max1, Max2, Max3, Max4, CDF}.
It turned out, cf. \cite[Theorem 4.3.8]{DHHR}, that $M$ is bounded on $L_{p(\cdot)}(\R^n)$ if $p^->1$ with the function $1/p(\cdot)$ satisfying the so-called \emph{$\log$-H\"older continuity} conditions.
Under the same regularity conditions on $p$ it was proven in \cite{CDF}, that if $p^-\ge 1$  the maximal operator maps $L_{p(\cdot)}(\R^n)$ into $L_{p(\cdot),\infty}(\R^n)$.
Quite naturally, this raises the question if the boundedness of $M$ on Lebesgue spaces of variable integrability could be deduced from this weak-type estimate and some version of the Marcinkiewicz interpolation theorem.

In its abstract setting, the same question was already posed as an open problem in \cite{DHN}. 
We recall their notation first. We say, that the sublinear operator $T$ is of weak-type
$(\pi_0(\cdot),\pi_1(\cdot))$, if
$$
\lambda \|\chi_{\{x\in\R^n:|Tf(x)|>\lambda\}}\|_{\pi_1(\cdot)}\le c\|f\|_{\pi_0(\cdot)}
$$
holds for some $c>0$ and all $f\in L_{\pi_0(\cdot)}(\R^n)$ and all $\lambda>0$. 

Let $\pi_0(\cdot)$ and $\pi_1(\cdot)$ be two variable exponents and let $0<\theta<1$ be a real number. Then we put
$$
\frac{1}{\pi_\theta(x)}:=\frac{1-\theta}{\pi_0(x)}+\frac{\theta}{\pi_1(x)}.
$$

The Question 2.8 from \cite{DHN} becomes

{\bf Question 2.8}(\cite{DHN}, Marcinkiewicz Interpolation) Let $T$ be a sublinear operator that is of weak type
$(\pi_0(\cdot),\pi_0(\cdot))$ and $(\pi_1(\cdot),\pi_1(\cdot))$. Is $T$ then bounded from $L_{\pi_\theta(\cdot)}(\R^n)$ to $L_{\pi_\theta(\cdot)}(\R^n)$?


We shall prove that the answer to Question 2.8 is negative. 

The basic idea of our construction is based on the observation that the condition 
\eqref{central} is necessary for the Marcinkiewicz interpolation theorem on usual $L_p(\Rn)$ with constant exponent, cf. \cite{H}.
It means that there are $0<p_0\not=p_1\le\infty$ and $0<q_0\not=q_1\le\infty$ and $0<\theta<1$ such that
$T$ is of weak type $(p_0,q_0)$ and $(p_1,q_1)$, $p>q$ and $T$ is not bounded from $L_p[0,1]$ to $L_q[0,1]$.

Then we set
$$
\tilde Tf(x):= \begin{cases} T(\chi_{[0,1]}f)(x-1), \quad x\in[1,2],\\
0 , \quad x\in[0,1).\end{cases}
$$
We put
$$
\pi_0(x):=\begin{cases}p_0,x\in[0,1),\\q_0,x\in[1,2]\end{cases}\quad\text{and}\quad
\pi_1(x):=\begin{cases}p_1,x\in[0,1),\\q_1,x\in[1,2].\end{cases}
$$
We obtain that $\tilde T$ is of weak type $(\pi_0(\cdot),\pi_0(\cdot))$ and of weak type $(\pi_1(\cdot),\pi_1(\cdot))$
but not of strong type $(\pi_\theta(\cdot),\pi_\theta(\cdot))$. Furthermore, a simple modification of this argument allows to construct 
a counterexample even for smooth parameters $\pi_0(\cdot)$ and $\pi_1(\cdot)$. To make the presentation self-contained, we provide a simple
construction of $T$ with the properties mentioned above.


\subsection{Specific counterexample for $T$}
In this section, we shall provide more details on the above given construction. Especially, we shall construct an operator $T$ which satisfies the assumptions from our counterexample.
Following the work of Hunt \cite{H,HW}, we define for $\alpha>0$ the following Hardy type operator
$$
(T_\alpha f)(x)=x^{-\alpha-1}\int_0^x f(t)dt, \quad 0<x<1.
$$
We observe first that $T_\alpha$ is linear and defined on all $L_1(0,1)$. Using the estimate $|\int_0^x f(t)dt|\le \int_0^x f^*(t)dt$ and H\"older's inequality
\begin{align*}
\|T_{1/2}f\|_{1,\infty}\le \sup_{0<x<1} x\cdot x^{-3/2}\int_0^x f^*(t)dt\le 
\sup_{0<x<1}x^{-1/2}\Bigl(\int_0^x (f^*(t))^2dt\Bigr)^{1/2}\cdot x^{1/2}=\|f\|_2,
\end{align*}
we obtain also the weak-type estimate $T_{1/2}:L_2(0,1)\to L_{1,\infty}(0,1)$. Furthermore, the boundedness of $T_{1/2}$ from $L_\infty(0,1)$ into $L_{2,\infty}(0,1)$
follows by
\begin{align*}
\|T_{1/2}f\|_{2,\infty}\le \sup_{0<x<1} x^{1/2}\cdot x^{-3/2}\int_0^x f^*(t)dt=\sup_{0<x<1}x^{-1}\cdot\int_0^x f^*(t)dt\le \|f\|_\infty.
\end{align*}
On the other hand, it is easy to see, that $T_{1/2}$ is not bounded from $L_4(0,1)$ into $L_{4/3}(0,1)$. Just take $f(t)=t^{-1/4}|\ln(t)|^{-1/4-\varepsilon}\chi_{[0,1/2]}(t)\in L_4(0,1)$ for $\varepsilon>0$
and calculate
\begin{align*}
\|T_{1/2}f\|_{4/3}&=\Bigl(\int_0^1 x^{-2} \Bigl(\int_0^x f(t)dt\Bigr)^{4/3}dx\Bigr)^{3/4}\\
&\ge \Bigl(\int_0^{1/2} x^{-2} \Bigl(\int_0^x t^{-1/4}|\ln(t)|^{-1/4-\varepsilon}dt\Bigr)^{4/3}dx\Bigr)^{3/4}\\
&\approx \Bigl(\int_0^{1/2} x^{-2} \Bigl(x^{3/4}|\ln(x)|^{-1/4-\varepsilon}\Bigr)^{4/3}dx\Bigr)^{3/4}\\
&=\Bigl(\int_0^{1/2}x^{-1}\cdot |\ln(x)|^{-1/3-\varepsilon\cdot 4/3}dx\Bigr)^{3/4}=\infty
\end{align*}
for $0<\varepsilon\le 1/2.$

\section{Open problems}
Although we presented some basic properties of the scale of Lorentz spaces with variable exponents, many questions remained opened for further investigations.

The first obvious generalization is to treat Lorentz spaces $L_{\p,\q}(\Omega,\mu)$ on arbitrary measure spaces $(\Omega,\mu)$
as it is usually done for Lorentz spaces with constant exponents, cf. \cite{Grafaneu}. We believe that the considerations above are also true in this case
and we have studied mainly spaces on $\R^n$ to simplify the notation.

It would be also highly desirable to obtain further results on real interpolation in this scale, complementing Theorem \ref{thm:interpol}.
Especially, it would be useful to have two variable exponent spaces as interpolation couple, i.e. to characterize $(L_{p_0(\cdot)}(\Rn),L_{p_1(\cdot)}(\Rn))_{\theta,q}$.

The use of $\ellqp$ spaces suggests yet another interesting idea, namely to allow for interpolation with variable parameter $\q$.
This option was already noticed in the introduction of \cite{AlHa10}. Unfortunately it turns out, and it is also mentioned in \cite{AlHa},
that the real interpolation spaces with variable parameter $\q$ lack in general the interpolation property.
On the other hand, it is possible, that under suitable conditions on the endpoint spaces, the real interpolation method with variable $\q$
works well and the interpolation property is restored again.

The last open problem is the starting point of this paper. If the Marcinkiewicz interpolation theorem would work in the scale of variable exponent spaces,
we would have found a very elegant way to prove the boundedness of the Hardy-Littlewood maximal operator on $\Lp$. Combining the weak estimate from \cite{CDF}
\begin{align*}
	M&:L_{p_0(\cdot)}(\Rn) \to L_{p_0(\cdot),\infty}(\Rn)\\
\intertext{with the trivial boundedness}
	M&:L_\infty(\Rn)\to L_\infty(\Rn)
\end{align*}
we could get the boundedness of $M$ on $\Lp$, with $\frac1\p=\frac{1-\theta}{p_0(\cdot)}$. Unfortunately the previous counterexample tells us, that Marcinkiewicz does not work on variable $\Lp$ spaces.
On the other hand, using Theorem \ref{thm:interpol}, we may easily show that
\begin{align*}
M:L_{\p,q}(\Rn)\to L_{\p,q}(\Rn)
\end{align*}
under the regularity assumptions on $\p$ as used in \cite{CDF}. This implies especially by chosing $q$ within $p^-\leq q\leq p^+$ the boundedness
\begin{align*}
M:L_{\p,p^-}(\Rn)\to L_{\p,p^+}(\Rn).
\end{align*}
This seems to be very suggestive as to why Marcinkiewicz interpolation might fail for the maximal operator. We would therefore be very much interested if it is possible to find additional conditions
on the sublinear operator $T$, which would ensure the validity of Marcinkiewicz interpolation.

\textbf{Acknowledgement:} The first author acknowledges the financial support by the DFG grants HA 2794/5-1 and KE 1847/1-1.
The second author acnowledges the support by the DFG Research Center MATHEON ``Mathematics for key technologies'' in Berlin.
Moreover we are very grateful for the comments of the anonymous referee which helped to improve our paper.

\thebibliography{99}

\bibitem{AlHa10} A. Almeida, P. H{\"a}st{\"o}, \emph{Besov spaces with variable smoothness and integrability},
J. Funct. Anal. \textbf{258} no. 5 (2010), 1628--1655.

\bibitem{AlHa} A. Almeida, P. H{\"a}st{\"o}, \emph{Interpolation in variable exponent spaces}, to appear in Rev. Mat. Complut.

\bibitem{AM} M.~A.~Ari\~no, B.~Muckenhoupt, \emph{Maximal functions on classical Lorentz spaces and Hardy's inequality with weights for nonincreasing functions}, 
Trans. Amer. Math. Soc. \textbf{320} no. 2 (1990), 727--735.

\bibitem{BS} C.~Bennett, R.~Sharpley, \emph{Interpolation of operators}, Academic Press, San Diego, 1988.

\bibitem{BerghL} J.~Bergh, J.~L\"ofstr\"om, \emph{Interpolation spaces. An introduction}, Springer, Berlin, 1976.

\bibitem{CPSS} M.~Carro, L.~Pick, J.~Soria, V. D. Stepanov, \emph{On embeddings between classical Lorentz spaces}, Math. Inequal. Appl. \textbf{4} no.~3 (2001), 397--428.

\bibitem{CDF} D.~Cruz-Uribe, L.~Diening, A.~Fiorenza,
\emph{A new proof of the boundedness of maximal operators on variable Lebesgue spaces},
Boll. Unione Mat. Ital. (9) \textbf{2} no.~1 (2009), 151--173.

\bibitem{Max2} D.~Cruz-Uribe, A.~Fiorenza, J.~Martell, C.~P\'erez, \emph{The boundedness of classical
operators in variable $L^p$ spaces}, Ann. Acad. Sci. Fenn. Math. \textbf{31} (2006), 239--264.

\bibitem{CruzUribe03} D. Cruz-Uribe, A. Fiorenza, C. J. Neugebauer, \emph{The maximal function on variable $L^p$ spaces}, Ann. Acad. Sci. Fenn. Math. \textbf{28} (2003), 223--238.

\bibitem{Max1} L. Diening, \emph{Maximal function on generalized Lebesgue spaces $L^{p(\cdot)}$}, Math. Inequal. Appl. \textbf{7} (2004), 245--253.

\bibitem{DHN} L. Diening, P.~H\"ast\"o, A. Nekvinda, \emph{Open problems in variable exponent Lebesgue and Sobolev spaces}, Proceedings FSDONA 2004, Academy of Sciences, Prague, 38--52.   

\bibitem{Max4} L. Diening, P. Harjulehto, P. H\"ast\"o, Y. Mizuta, T. Shimomura, 
\emph{Maximal functions in variable exponent spaces: limiting cases of the exponent}, Ann. Acad. Sci. Fenn. Math.  \textbf{34} (2009), 503--522.

\bibitem{DHHR} L. Diening, P. Harjulehto, P. H\"ast\"o, M. R{\accent23 u}\v{z}i\v{c}ka,
\emph{Lebesgue and Sobolev Spaces with Variable Exponents},
Lecture Notes in Mathematics \textbf{2017}, Springer, Berlin, 2011.

\bibitem{EphreKoki} L. Ephremidze, V. Kokilashvili, S. Samko, \emph{Fractional, maximal and singular operators in variable exponent Lorentz spaces}, Fract. Calc. Appl. Anal. \textbf{11} no. 4 (2008), 407--420.

\bibitem{Grafaneu} L. Grafakos, \emph{Classical Fourier analysis}, Second edition, Grad. Texts Math. \textbf{249}, Springer, Berlin, 2008.

\bibitem{H} R. A. Hunt, \emph{An extension of the Marcinkiewicz interpolation theorem to Lorentz spaces},
Bull. Amer. Math. Soc. \textbf{70}, (1964) 803--807.

\bibitem{HW} R. A. Hunt and G. Weiss, \emph{The Marcinkiewicz interpolation theorem},
Proc. Amer. Math. Soc. \textbf{15}, (1964) 996--998.

\bibitem{IsraKoki} D.M. Israfilov, V. Kokilashvili, N.P. Tuzkaya, \emph{The classical integral operators in weighted Lorentz spaces with variable exponent}, IBSU Scientific Journal \textbf{1} issue 1, (2006), 171--178.

\bibitem{KokiSamko} V. Kokilashvili, S. Samko, \emph{Singular integrals and potentials in some Banach spaces with variable exponent},
	 J. Funct. Spaces and Appl. \textbf{1} (1), (2003), 45--59.

\bibitem{KV11} H. Kempka, J. Vyb\'\i ral, \emph{A note on the spaces of variable integrability and summability of Almeida and H\"ast\"o},
Proc. Amer. Math. Soc. \textbf{141} (9) (2013), 3207--3212.

\bibitem{Kop09} T. Kopaliani, \emph{Interpolation theorems for variable exponent {L}ebesgue spaces}, J. Funct. Anal. \textbf{257} (2009), 3541--3551.

\bibitem{KoRa}O. Kov\'{a}\v{c}ik, J. R\'{a}kosn\'{i}k, \emph{On spaces $L^{p(x)}$ and $W^{1,p(x)}$},
Czechoslovak Math. J. \textbf{41} ({116}) (1991), 592--618.

\bibitem{Lor1} G. G. Lorentz, \emph{Some new functional spaces}, Ann. of Math. (2) \textbf{51} (1950), 37--55.

\bibitem{Lor2} G. G. Lorentz, \emph{On the theory of spaces $\Lambda$}, Pacific J. Math. \textbf{1} (1951), 411--429.

\bibitem{Musielak} J. Musielak, \emph{Orlicz spaces and modular spaces}, Lecture Notes in Mathematics \textbf{1034}, Springer, Berlin, 1983.

\bibitem{Max3} A. Nekvinda, \emph{Hardy-Littlewood maximal operator on $L^{p(x)}(\R^n)$}, Math. Inequal. Appl. \textbf{7} (2004), 255--266.

\bibitem{Orlicz} W. Orlicz, \emph{\"Uber konjugierte Exponentenfolgen},
Studia Math. \textbf{3} (1931), 200--212.

\bibitem{Ruz1} M. R{\accent23 u}\v{z}i\v{c}ka, \emph{Electrorheological fluids: modeling and mathematical theory},
Lecture Notes in Mathematics \textbf{1748}, Springer, Berlin, 2000.

\bibitem{Saw} E.~Sawyer, \emph{Boundedness of classical operators on classical Lorentz spaces}, Studia Math. \textbf{96} no. 2 (1990), 145--158.

\bibitem{S} E.~M.~Stein, \emph{Singular integrals and differentiability properties of functions}, Princeton Mathematical Series, No. 30 Princeton University Press, Princeton, N.J. 1970.

\bibitem{SW} E.~M.~Stein, G.~Weiss, \emph{Introduction to Fourier analysis on Euclidean spaces}, Princeton Mathematical Series, No. 32. Princeton University Press, Princeton, N.J., 1971.

\bibitem{Triebel} H.~Triebel, \emph{Interpolation theory, function spaces, differential operators}, {V}erlag der {W}issenschaften, Berlin, 1978.

\end{document}